\newcommand{\Conv}{
  \mathop{\scalebox{3.0}{\raisebox{-0.1ex}{$\ast$}}
  }
}
\newcommand{\CANTOR}[3]{
\ifnum 0 < \numexpr#1 {
    \CANTOR{\numexpr#1-1}{#2*0.333333}{#3}
    \CANTOR{\numexpr#1-1}{#2*0.333333}{#3+0.666666*#2}}
\else{
    \draw[black,line width=0.5cm] (#3,0)--(#3+#2,0);}
\fi{}
}
\newcommand{\CANTORM}[3]{
\ifnum 0 < \numexpr#1 {
    \CANTOR{\numexpr#1-1}{#2*0.4}{#3}
    \CANTOR{\numexpr#1-1}{#2*0.4}{#3+0.6*#2}}
\else{
    \draw[black,line width=0.5cm] (#3,0)--(#3+#2,0);}
\fi{}
}
\newenvironment{myquote}[1]
  {\list{}{\leftmargin=#1\rightmargin=#1}\item[]}
  {\endlist}
\theoremstyle{plain}
\newtheorem{example}{Example}[section]
\subjclass[2010]{Primary 11B85; Secondary 42A38, 28A80}
\address{School of Mathematical and Physical Sciences, 
   University of Newcastle, \newline
\hspace*{\parindent}University Drive, Callaghan NSW 2308, Australia}
\email{james.evans10@uon.edu.au}
\title{The Ghost Measures of Affine Regular Sequences}
\author{James Evans}
\date{}
\begin{document}

\maketitle

\begin{abstract}
    The $k$-regular sequences exhibit a self-similar behaviour between powers of $k$. One way to study this self-similarity is to attempt to describe the limiting shape of the sequence using measures, which results in an object which we call the \textit{ghost measure}. The aim of this paper is to explicitly calculate this measure and some of its properties, including its Lebesgue decomposition, for the general family of \textit{affine} $2$-regular sequences.
\end{abstract}

\section{Introduction}

The concept of self-similarity is prevalent in many areas of mathematics, such as symbolic dynamics and fractals. The examples that concern us presently are the $k$-regular sequences, a generalisation of automatic sequences which have relevance to number theory and Mahler functions. The definition of a $k$-regular sequence involves the base $k$-representation of $n$, giving the sequences a recurrent behaviour between powers of $k$.

A common theme in the study of self similarity is the construction of a full self-similar picture via a limit process using finite approximations; constructing an infinite fixed point of a substitution by iteration, constructing the cantor set by intersecting level sets, etc. When studying a $k$-regular sequence $f$, we can use the following natural idea. We take the terms of the sequence between adjacent powers of $k$, interpret them as weights of a Dirac comb in $[0,1)$ and then normalise to create a sequence of probability measures on the $1$-torus $\mathbb{T}$,
$$
\mu_{N}=\frac{1}{\Sigma(N)}\sum_{n=0}^{k^{N+1}-k^{N}-1}f(k^{N}+n)\delta_{\frac{n}{k^{N}(k-1)}}.
$$
If this sequence vaguely converges, we call the limit $\mu$, the \textit{ghost measure} of the sequence. This procedure was first demonstrated by Baake and Coons \cite{SternSequenceMeasure}, who proved that the ghost measure of Stern's Diatomic sequence is singular continuous. Subsequently Coons, Evans and Manibo \cite{CEMRegularPaper} have explored the general question existence of ghost measures. This paper studies specific examples of the measure arising from the general family of \textit{affine} $2$-regular sequences. These are sequences obeying the relations
\begin{align}
    f(2n)=A_{0}f(n)+b_{0},\quad f(2n+1)=A_{1}f(n)+b_{1}, \label{eqn1}
\end{align}
where the coefficients are non-negative integers, not all zero. We first prove the following classification of the Lebesgue decomposition of their ghost measures.
\begin{thm} \label{THEOREM}
Suppose that $f(n)$ is an affine $2$-regular sequence as in \eqref{eqn1}. Then its ghost measure $\mu$ exists and its Lebesgue decomposition is as follows.
\begin{enumerate}
    \item Suppose $b_{0}=b_{1}=0$:
    \begin{itemize}
        \item[(A)] if $A_{0}=A_{1}\neq 0$ then $\mu$ equals Lebesgue measure $\lambda$,
        \item[(B)] if $A_{0}\neq A_{1}$, neither equal $0$, then $\mu$ is singular continuous,
        \item[(C)] if $A_{0}$ or $A_{1}$ is $0$, then $\mu$ is pure point, and equals $\delta_{0}$.
    \end{itemize}
    \item Suppose $b_{0}+b_{1}\neq 0$:
    \begin{itemize}
        \item[(A)] if $A_{0}+A_{1}\leqslant 2$, then $\mu=\lambda$,
        \item[(B)] if $A_{0}=A_{1}>1$, then $\mu$ is absolutely continuous,
        \item[(C)] if $A_{0}\neq A_{1}$, neither $0$, then $\mu$ is singular continuous,
        \item[(D)] if $A_{0}$ or $A_{1}=0$, the other greater than $2$, then $\mu$ is pure point.
    \end{itemize}
\end{enumerate}
\end{thm}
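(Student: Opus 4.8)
The plan is to reduce everything to two explicit computations: a closed form for the sequence values between consecutive powers of $2$, and an exact formula for the mass the approximating measures assign to dyadic intervals. Write $g_N(n)=f(2^N+n)$ for $0\le n<2^N$ and let $\epsilon_1\cdots\epsilon_N$ be the binary digits of $n$ (most significant first). Iterating \eqref{eqn1} by peeling off the least significant digit gives
\[
g_N(n)=f(1)\prod_{i=1}^N A_{\epsilon_i}+\sum_{j=1}^N b_{\epsilon_j}\prod_{i=j+1}^N A_{\epsilon_i},
\]
and summing over all digit strings yields the linear recursion $\Sigma(N+1)=s\,\Sigma(N)+2^N b$ for the normalising constants, where $s=A_0+A_1$ and $b=b_0+b_1$. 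Solving this recursion separates the problem into the regimes $s<2$, $s=2$ and $s>2$, with $\Sigma(N)$ of order $2^N$, $N2^N$ and $s^N$ respectively.

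Next I would compute, for a level-$M$ dyadic interval $I$ whose endpoints have digits $\delta_1\cdots\delta_M$, the total $\mu_N$-weight $\sum_{n\in I}g_N(n)$ by summing the closed form over the free low-order digits $\epsilon_{M+1},\dots,\epsilon_N$. Collapsing the resulting geometric sums gives
\[
\sum_{n\in I}g_N(n)=s^{N-M}g_M(j)+\frac{b\,s^N}{2^{M+1}}\sum_{r=M+1}^{N}\Big(\tfrac{2}{s}\Big)^r,
\]
where $j$ is the integer with digits $\delta_1\cdots\delta_M$. Dividing by $\Sigma(N)$ and letting $N\to\infty$ both establishes that $\mu$ exists and evaluates it on every dyadic interval. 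When $s\le 2$ (with $b\neq 0$) the second term dominates and one finds $\mu(I)=2^{-M}$ for every $I$, so $\mu=\lambda$, settling case 2(A). When $s>2$ the limit is
\[
\mu(I)=\frac{g_M(j)+\beta}{C\,s^M},\qquad \beta=\frac{b}{s-2},\quad C=f(1)+\beta,
\]
which, together with \eqref{eqn1} in the form $g_{M+1}(2j+\epsilon)=A_\epsilon g_M(j)+b_\epsilon$, exhibits $\mu$ as a multiplicative cascade whose splitting ratios $\frac{A_\epsilon g_M(j)+b_\epsilon+\beta}{s\,(g_M(j)+\beta)}$ tend to $A_\epsilon/s$ as $g_M(j)\to\infty$. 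The homogeneous case $b=0$ is the specialisation $\beta=0$, where the weights form the Bernoulli product measure with digit probabilities $A_\epsilon/s$.

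With these formulas in hand the classification reduces to identifying the type of each measure. In the homogeneous case I would invoke the standard theory of Bernoulli digit measures: probabilities $(\tfrac12,\tfrac12)$ give $\lambda$ (case 1(A)); unequal nonzero probabilities give a measure concentrated on a Lebesgue-null set of atypical digit frequency, hence singular continuous (case 1(B)); and a zero probability forces all mass onto a single dyadic tail, giving $\delta_0$ (case 1(C)). For $s>2$ the cascade is governed by the orbit of $g_M(j)$ under \eqref{eqn1}. In case 2(D), where some $A_\epsilon=0$, the value $g_M$ is reset to a constant at every occurrence of that digit and grows like $s^M$ only along eventually-constant strings; the surviving atoms sit at the dyadic rationals and $\mu$ is pure point. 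In case 2(C) both $A_\epsilon$ are positive and unequal, so $g_M(j)\to\infty$ along $\mu$-typical strings and the ratios converge to the biased pair $A_\epsilon/s\neq\tfrac12$; as in 1(B) this concentrates $\mu$ on a null set giving singular continuity, while $g_M/s^M\to0$ rules out atoms. In case 2(B), where $A_0=A_1>1$, the ratios converge to $\tfrac12$ and, because $g_M\to\infty$ geometrically, the corrections $\frac{b_\epsilon-(A-1)\beta}{A(g_M+\beta)}$ to uniform splitting are summable along almost every path, so the densities $2^M\mu(I)$ form an $L^1$-convergent martingale and $\mu$ is absolutely continuous.

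The main obstacle is this last step for the two delicate regimes 2(B) and 2(C): passing from the asymptotics of the splitting ratios to a rigorous statement about the Lebesgue decomposition. The cascade is non-stationary, since the ratios depend on the running value $g_M(j)$ and not merely on the appended digit, so the independence of a genuine Bernoulli measure is only recovered in the limit. I expect the real work to be controlling this non-stationarity: for singularity in 2(C) by proving, via a strong law of large numbers adapted to the cascade, that $\mu$ gives full measure to the (Lebesgue-null) set of points whose digit frequency equals $A_1/s$; and for absolute continuity in 2(B) by establishing uniform integrability of the martingale $2^M\mu(I)$ from the summability of the multiplicative corrections, which in turn rests on the geometric growth $g_M\gtrsim A^M$. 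Verifying that the atomic masses in 2(D) sum to $1$, so that no residual continuous part remains, is a further bookkeeping point. The existence of $\mu$ and all the Lebesgue and Bernoulli identifications, by contrast, fall out directly from the two displayed formulas.
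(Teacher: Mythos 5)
Your strategy is genuinely different from the paper's, and much of it is sound. The paper proves existence and cases 1A, 1B, 2A, 2B (and continuity in 2C) Fourier-analytically --- Levy's continuity theorem, an explicit recursion for $\widehat{\mu_N}(t)$, the Riesz--Fischer theorem, and Wiener's criterion --- and only afterwards computes masses of dyadic intervals to describe the finer structure. You make that dyadic-interval computation the engine of the entire classification: your displayed formula for $\mu(I)$ when $s>2$ is exactly the paper's Lemma \ref{MainResult}, and your bounded-martingale argument for 2B is in substance the paper's Radon--Nikodym computation (indeed cleaner than Riesz--Fischer, since the densities $2^M\mu(I_M(x))=\bigl(f(1)+\sum_{j\leqslant M}b_{x_j}A^{-j}+\mathcal{O}(A^{-M})\bigr)/C$ are uniformly bounded and converge at every $x$). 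The identifications in 1A, 1B, 2A and the closed forms for $\Sigma(N)$ and $\mu(I)$ are all correct.

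There is, however, a genuine gap, and it sits exactly where you wave at ``bookkeeping'': the claim that the limits $\lim_N\mu_N(I)$ over half-open dyadic intervals \emph{establish that $\mu$ exists and evaluate it}. This fails precisely in the atomic cases. Take 2D with $A_0=0$, $A_1=A\geqslant 3$, $b\neq 0$, and the intervals $I_M=[1-2^{-M},1)$ (digits $\delta_1=\cdots=\delta_M=1$). Your formula gives $\lim_N\mu_N(I_M)=\bigl(f(1)+\tfrac{b_1}{A-1}+\mathcal{O}(A^{-M})\bigr)/C$, bounded below by a positive constant for every $M$, while $\bigcap_M I_M=\emptyset$. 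So the limiting set function is finitely additive but not $\sigma$-additive: it is not the restriction of any Borel measure, and in particular not of the ghost measure. What actually happens is that the mass sitting at the points $1-2^{-N}$ wraps around the torus and becomes an atom of $\mu$ at $0\equiv 1$, i.e.\ at a right endpoint lying outside every $I_M$; the same phenomenon occurs in 1C when $A_0=0$, where $\mu_N=\delta_{1-2^{-N}}\to\delta_0$. In these cases your framework can neither define nor compute $\mu$; one needs the $\limsup$/closed-set half of the Portmanteau theorem (Theorem \ref{Portmanteau}) applied to singletons, together with the check that the resulting atom masses sum to $1$, which is how the paper's Theorem \ref{PurePoint} proceeds. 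Relatedly, even in the continuous cases, convergence on dyadic intervals does not by itself yield vague convergence; you need a short compactness/subsequential-limit argument using the fact that no subsequential limit charges dyadic points.

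Your second acknowledged obstacle, singularity in 2C, is real but you have picked the harder of the two available routes. No strong law of large numbers under the non-stationary cascade $\mu$ is needed: from your own formula, for Lebesgue-almost every (i.e.\ normal) $x$ one has $\mu(I_M(x))/\lambda(I_M(x))\leqslant\bigl(2\sqrt{A_0A_1}/(A_0+A_1)\bigr)^{M(1+o(1))}\to 0$ by AM--GM since $A_0\neq A_1$, so the differentiation theorem for measures (Theorem \ref{RudinTheorem}) forces the density of the absolutely continuous part to vanish almost everywhere; and the uniform bound $\mu(I_M(x))=\mathcal{O}\bigl(\max(A_0,A_1)^M/(A_0+A_1)^M\bigr)\to 0$ rules out atoms. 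That closes 2C using only Lebesgue-side input (Borel's normal number theorem), and it is exactly the paper's argument in Lemma \ref{LemmaName}; I recommend replacing your SLLN plan with it.
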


We then give more detail about these measures, including what the Radon--Nikodym derivative is in \textbf{2B}, where the measure is concentrated in \textbf{2C} and what the point weights are in \textbf{2D} (bold labels refer to the cases of Theorem \ref{THEOREM}). Here we see self-similarity cropping up again; not only is it describing the limit shape that the sequence, $\mu$ itself also has many fractal-like characteristics.

Before moving on, we explain the name \textit{ghost measure}. Neither \cite{SternSequenceMeasure} nor \cite{CEMRegularPaper} give a name to their construction. The inspiration for ours comes from Berkeley's critique of infinitesimals in \textit{The Analyst} \cite{TheAnalyst}, when he says that they are

\begin{myquote}{0.9cm}
   $\ldots$ neither finite quantities nor quantities infinitely small, nor yet nothing. May we not call them \textbf{the ghosts of departed quantities?}
\end{myquote}
The values $f(n)$ are (usually) much smaller than the sum of all terms, so the individual pure points of the $\mu_{N}$ disappear in the averaging as $N$ tends to infinity. The measure $\mu$ is the ethereal imprint that is left behind, the ghost of the departed pure points of the $\mu_{N}$. There is something \textit{spooky} about this. The sequence is discrete; but as we shall see, $\mu$ is often continuous, making the measure a strange, uncountable reflection of a countable structure.

\section{Preliminaries}

A sequence $\{f(n)\}_{n=0}^{\infty}$ is $k$-regular if there are integer $d\times d$ matrices $C_{0},\ldots,C_{k-1}$ and $d\times 1$ vectors $L,M$ (together called the linear representation of the sequence) such that $f(n)=L^{T}\cdot C_{i_{l}}C_{i_{l-1}}\cdots C_{i_{0}}\cdot M$, where $i_{l}\cdots i_{1}i_{0}$ is the base $k$ representation of $n$. For more details see \cite{AutomaticSequences,RingRegularSequences,RingRegularSequences2}.

\begin{dfn}[Ghost Measure]\label{GhostMeasure}
Let $\{f(n)\}_{n=0}^{\infty}$ be a non-negative $k$-regular sequence. The intervals $\mathcal{R}_{N}=[k^{N},k^{N}+1,\ldots,k^{N+1}-1)$, $N\geqslant 0$ are called the fundamental regions. Define $\Sigma(N)=\sum_{n \in \mathcal{R}_{N}}f(n)$. Identify the $1$-torus $\mathbb{T}$ as $[0,1)$ with addition modulo $1$ and let $\delta_{x}$ denote the Dirac delta measure at $x$. The \textit{$N$th approximant to the ghost measure} is the pure point probability measure
$$
\mu_{N}=\frac{1}{\Sigma(N)}\sum_{n=0}^{k^{N}(k-1)-1}f(k^{N}+n)\delta_{\frac{n}{k^{N}(k-1)}}.
$$
The ghost measure is defined to be the vague limit\footnote{A sequence of Borel measures $\mu_{n}$ on $\mathbb{T}$ vaguely converges to another Borel measure $\mu$ if for every continuous function $f$, $\int f d \mu_{n} \to \int f d\mu$ as $n \to \infty$. For more details, see  \cite{baake_grimm_2017,RudinFourierAnalysis,RudinAnalysis}.} $\mu=\lim_{N \to \infty}\mu_{N}$ (if it exists).
\end{dfn}

There is some choice in this definition. It might sometimes be more natural to use $[0,k^{N}-1]$ rather than $[k^{N},k^{N+1}-1]$ as the fundamental regions. One can also go beyond $k$-regular sequences and apply the idea to any sequence $f(n)$ and series of intervals $\mathcal{R}_{N}$ whose length tends to infinity, but only with sufficient regularity will this result in some interesting objects.

Our focus shall be on the Lebesgue decomposition of ghost measures, so we review the concept briefly. In the following $\lambda$  denotes Lebesgue measure on $\mathbb{T}$.

\begin{thm}[Lebesgue decomposition theorem] \cite[Chapter 6]{RudinAnalysis}
Suppose that $\mu$ is a finite Borel measure on $\mathbb{T}$. Then there exists three other measures $\mu_{pp}$, $\mu_{sc}$ and $\mu_{ac}$ such that $\mu=\mu_{pp}+\mu_{sc}+\mu_{ac}$, where
\begin{itemize}
    \item $\mu_{pp}$ is pure point: it is a countable sum of weighted Dirac delta measures,
    \item $\mu_{sc}$ is singular continuous: it is concentrated on an uncountable lebesgue measure zero set,
    \item $\mu_{ac}$ is absolutely continuous: there is a function (its Radon--Nikodym derivative) $g\in L^{1}(\mathbb{T})$ such that $d\mu_{ac}=g d\lambda$.
\end{itemize}
\end{thm}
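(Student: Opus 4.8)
The plan is to construct the decomposition in two stages: first peel off the atoms to obtain the pure point part, then apply the Radon--Nikodym machinery to the atom-free remainder to separate the absolutely continuous and singular continuous pieces. The only substantial analytic input is the Radon--Nikodym theorem; everything else is measure-theoretic bookkeeping.

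First I would isolate the pure point part. For the finite measure $\mu$, the set of atoms $\{x \in \mathbb{T} : \mu(\{x\}) > 0\}$ is at most countable, since for each $m$ the collection of points of mass at least $1/m$ has cardinality at most $m\,\mu(\mathbb{T})$ (otherwise the total mass would exceed $\mu(\mathbb{T})$), and the atoms are the union of these finite collections over $m \geq 1$. Enumerating the atoms as $\{x_i\}$, I would set $\mu_{pp} = \sum_i \mu(\{x_i\})\delta_{x_i}$, a countable weighted sum of Dirac masses, and define the remainder $\mu_c = \mu - \mu_{pp}$. Because the atoms are distinct points, $\mu_{pp}(E) \leqslant \mu(E)$ for every Borel set $E$, so $\mu_c$ is a genuine nonnegative finite measure, and by construction it has no atoms; that is, $\mu_c$ is continuous.

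Next I would split $\mu_c$ relative to $\lambda$. Applying the Radon--Nikodym/Lebesgue theorem to the pair $(\mu_c, \lambda)$ yields $\mu_c = \mu_{ac} + \mu_{sc}$ with $\mu_{ac} \ll \lambda$ and $\mu_{sc} \perp \lambda$; absolute continuity supplies a density $g = d\mu_{ac}/d\lambda \in L^1(\mathbb{T})$, while singularity supplies a Borel set $S$ with $\lambda(S) = 0$ on which $\mu_{sc}$ is concentrated. Since $\mu_c$ has no atoms and $\mu_{ac}$ (being absolutely continuous) has none either, the difference $\mu_{sc}$ is atom-free as well; hence if $\mu_{sc} \neq 0$ its concentration set $S$ cannot be countable, because a continuous measure assigns zero mass to any countable set. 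Assembling the pieces gives $\mu = \mu_{pp} + \mu_{sc} + \mu_{ac}$ with exactly the three stated properties.

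The main obstacle is the Radon--Nikodym step, and the cleanest route I would take is von Neumann's Hilbert space argument: set $\nu = \mu_c + \lambda$, represent the bounded functional $h \mapsto \int h \, d\mu_c$ on $L^2(\nu)$ by a function $g_0$ via the Riesz representation theorem, verify that $0 \leqslant g_0 \leqslant 1$ holds $\nu$-almost everywhere, and then use the partition of $\mathbb{T}$ into $\{g_0 < 1\}$ and $\{g_0 = 1\}$ to read off the absolutely continuous and singular parts, with density $g_0/(1 - g_0)$ on the former. The care needed here lies in justifying the almost-everywhere bounds on $g_0$ and in checking that $\lambda$ assigns zero measure to $\{g_0 = 1\}$, which is precisely what forces $\mu_{sc} \perp \lambda$. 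Uniqueness of the decomposition, should it be wanted, then follows routinely, since any two representations would produce a measure that is simultaneously absolutely continuous and singular, hence zero.
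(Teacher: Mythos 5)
Your proposal is correct: the paper does not prove this statement but cites it to Rudin (Chapter 6), and your argument --- peeling off the countable atomic part and then running von Neumann's $L^{2}(\mu_{c}+\lambda)$ Riesz-representation argument to split the continuous remainder into absolutely continuous and singular pieces --- is exactly the standard proof given in that reference, augmented by the routine observation that a nonzero continuous singular measure must be concentrated on an uncountable $\lambda$-null set. No gaps; the atom-counting bound, the nonnegativity of $\mu-\mu_{pp}$, and the treatment of the set $\{g_{0}=1\}$ are all handled correctly.
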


The theorem is useful because each component has very different properties, so knowing the decomposition provides a qualitative assessment of how the measure and the object from which it was constructed are behaving. This is particularly true when the measure has pure type, that is, only one component is non-zero.

\section{The Affine regular sequences} \label{EXAMPLES}

As a reminder, the affine $2$-regular sequences are sequences obeying $f(2n)=A_{0}f(n)+b_{0}$, $f(2n+1)=A_{1}f(n)+b_{1}$. The relations may be inconsistent at $n=0$, so we simply start with $f(1)$. We restrict the coefficients to be non-negative integers, not all zero, to guarantee that the sequences are non-negative and that $\mu$ will be a probability measure. Despite their simplicity, the affine $2$-regular sequences include many famous and important examples \cite{RingRegularSequences,RingRegularSequences2}, which we briefly review. Where relevant, the plot on the left shows the sequence from $0$ up to $2^{N}$ while the right shows $F_{N}(x)=\mu_{N}([0,x])$, for some large $N$. These plots clearly show the self-similar behaviour that we are trying to describe

\begin{example} The constant sequence $f(n)=1$, obeying $f(2n)=1$ and $f(2n+1)=1$ is trivially an affine $2$-regular sequence. The ghost measure of this sequence is Lebesgue measure, and hence absolutely continuous.
\end{example}

\begin{example}
The sequence $f(n)=n$ obeys $f(0)=0$, $f(2n)=2f(n)$, $f(2n+1)=2f(n)+1$. Its ghost measure is absolutely continuous and equals $\frac{2+2x}{3} \lambda$.
\end{example}

\begin{example}[Gould's Sequences]
These are among the oldest $2$-regular sequences to be described as such. The first is $g(n)=$ the number of $1$'s in $n$'s binary expansion, which obeys $g(2n)=g(n)$, $g(2n+1)=g(n)+1$, $g(0)=0$. It has $\Sigma(N)=2^{N-1}(N+2)$ and ghost measure  $\mu=\lambda$. The second version is $G(n)=2^{g(n)}$, obeying $G(2n)=G(n)$, $G(2n+1)=2G(n)$, $G(0)=1$. It has $\Sigma(N)=2\cdot3^{N}$ and a singular continuous ghost measure. It turns out that $G(n)$ counts the odd numbers in the $n$th row of Pascal's triangle \cite{BinomialCoefficients}.
\begin{figure}[h]
    \centering
    \includegraphics[scale=0.4]{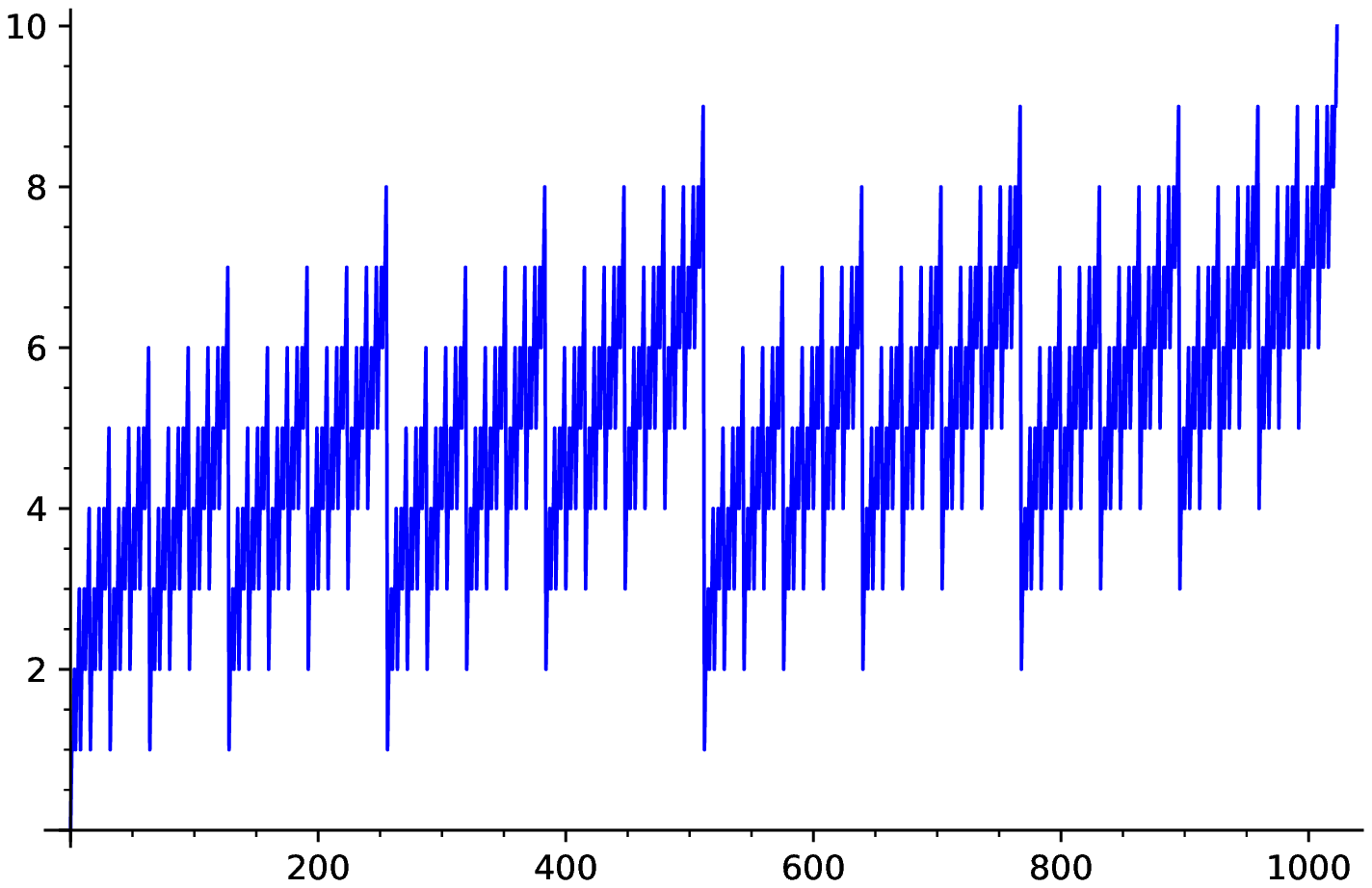}
    \includegraphics[scale=0.4]{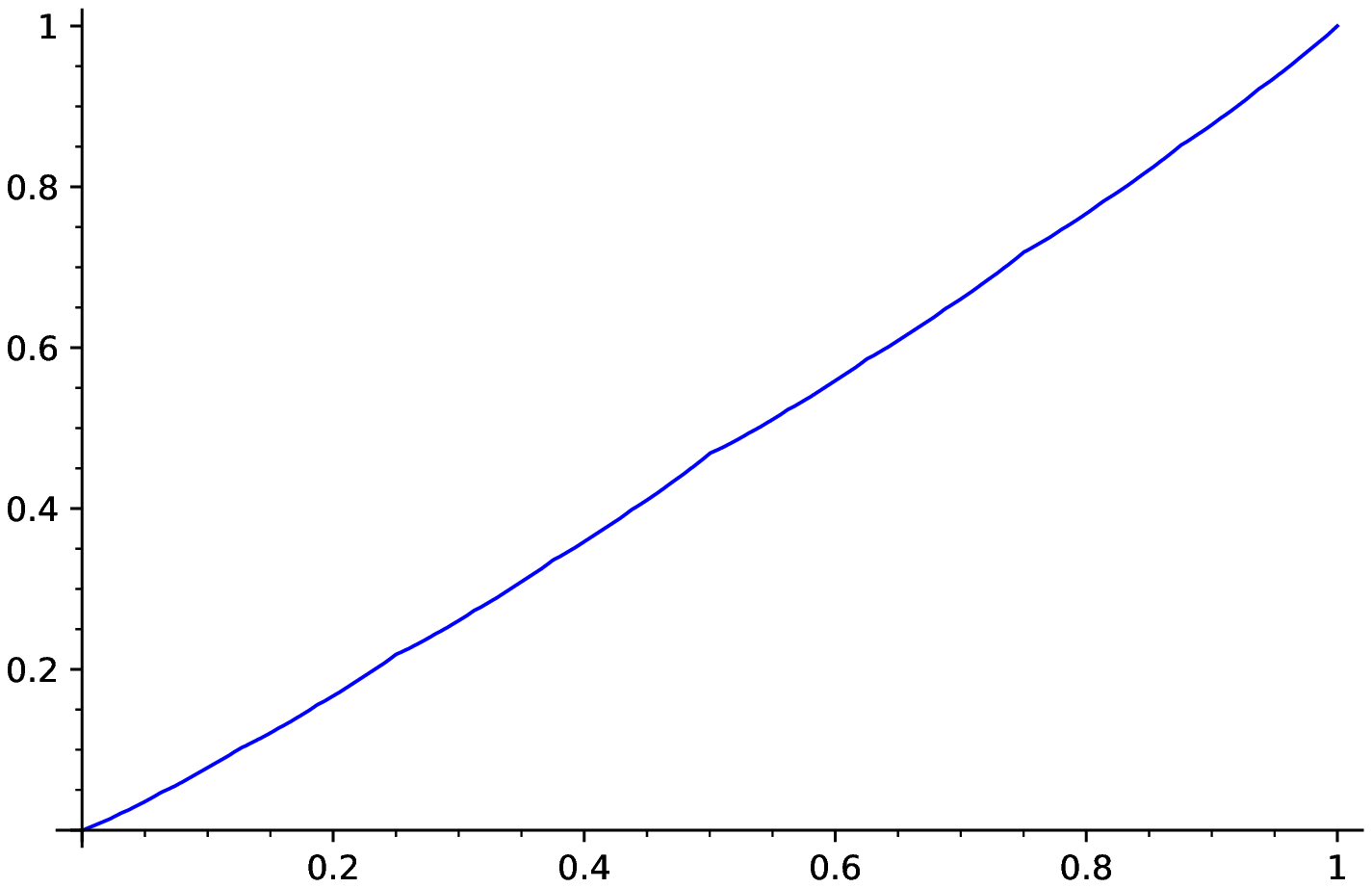}
    \caption{$g(n)$}
\end{figure}

\begin{figure}[h]
    \centering
    \includegraphics[scale=0.4]{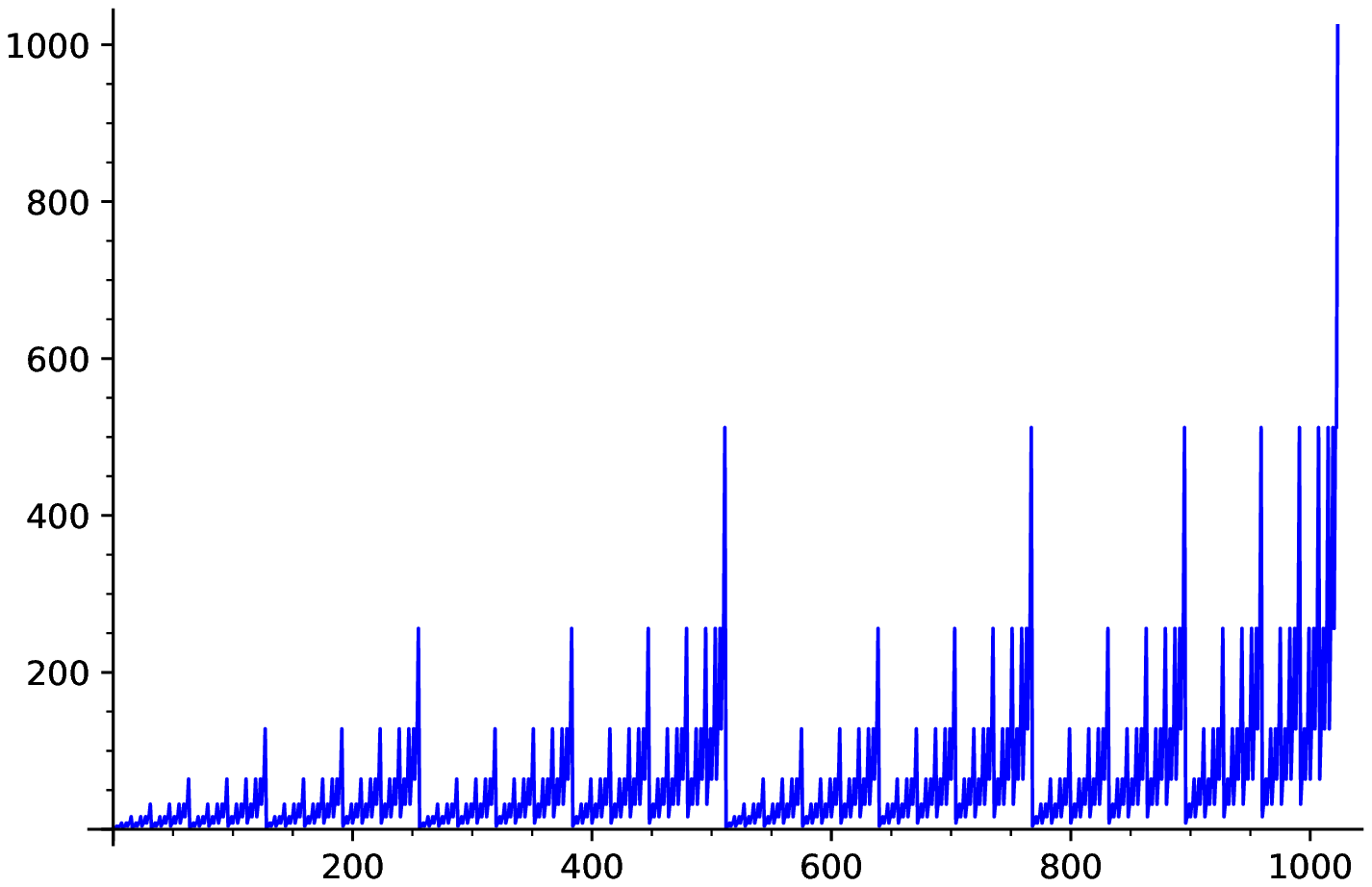}
    \includegraphics[scale=0.4]{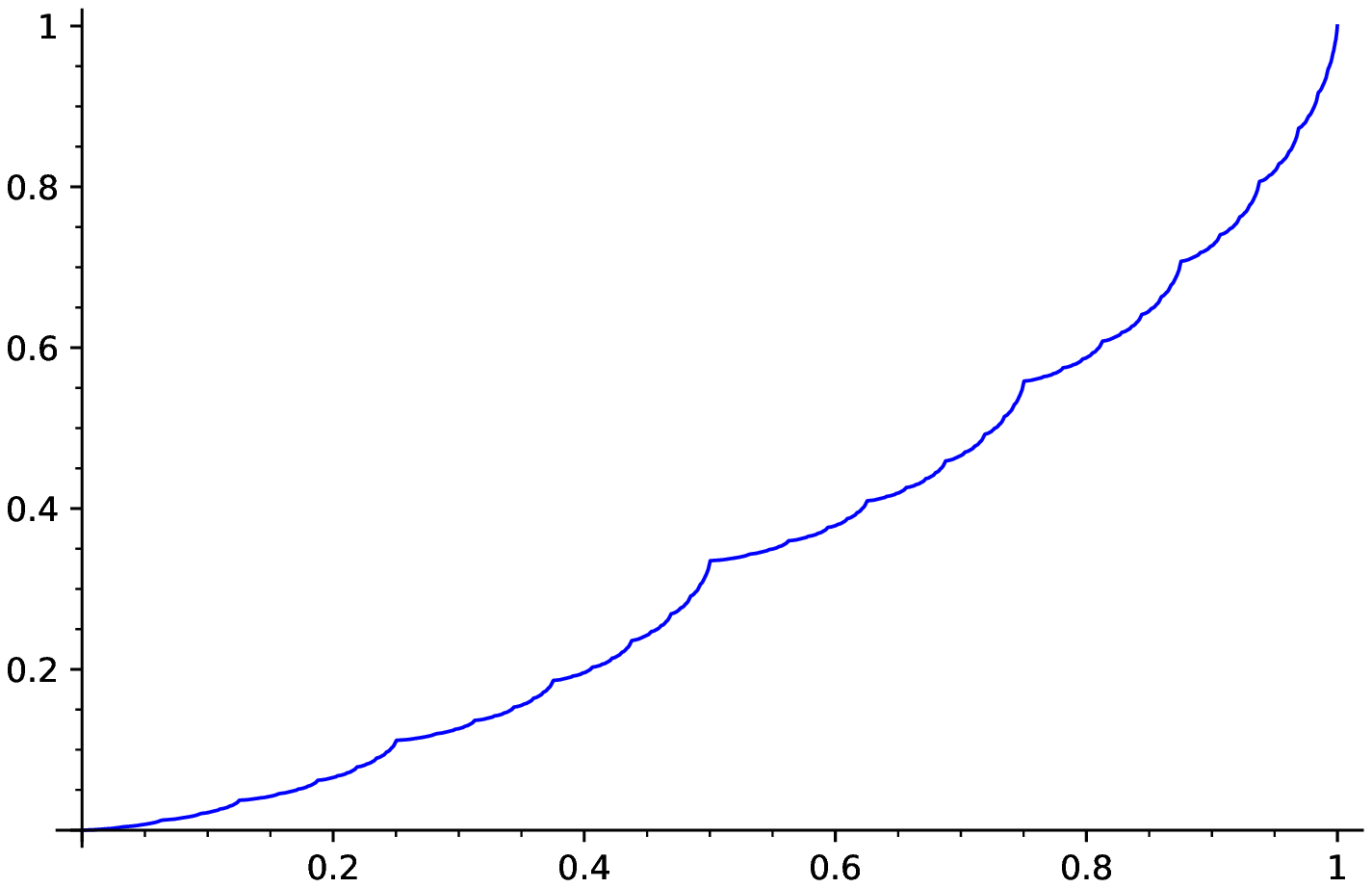}
    \caption{$G(n)$}
\end{figure}

It might be surprising that the ghost measure of $g(n)$ is Lebesgue measure, given the sequence's irregularity. However, on large scales most of the mass is evenly distributed, as shown by $F_{N}(x)$ being essentially a straight line. For $G(n)$ the peaks are more pronounced resulting in a singular measure. Interestingly, $G(n)$'s $F_{N}(x)$ is among the first explicit examples of strictly increasing, singular continuous functions that were constructed by Salem \cite{SalemStrictlyIncreasingSingularFunctions}. One can use this fact as part of another proof of \textbf{2C}.
\end{example}

\begin{example}[The Ruler sequences]
As with Gould's, there are two versions. There is $r(n)$ the $2$-adic valuation of $n$ and $R(n)=2^{r(n)}$. The relations for $r(n)$ are $r(2n)=r(n)+1$, $r(2n+1)=0$, while $R(n)$ obeys $R(2n)=2R(n)$, $R(2n+1)=1$. These sequences have $\Sigma(N)=2^{N}-1$ and $2^{N-1}(N+2)$ respectively. Both have ghost measure equal to Lebesgue measure. Even though both sequences have large peaks, in the limit all of $\mu_{N}$'s mass is concentrated in the small, uniformly distributed pure points. The convergence can be quite slow, especially for $R(n)$ as shown by large discontinuities in $F_{N}(x)$, even for the large $N$ chosen.

\begin{figure}[h]
    \centering
    \includegraphics[scale=0.4]{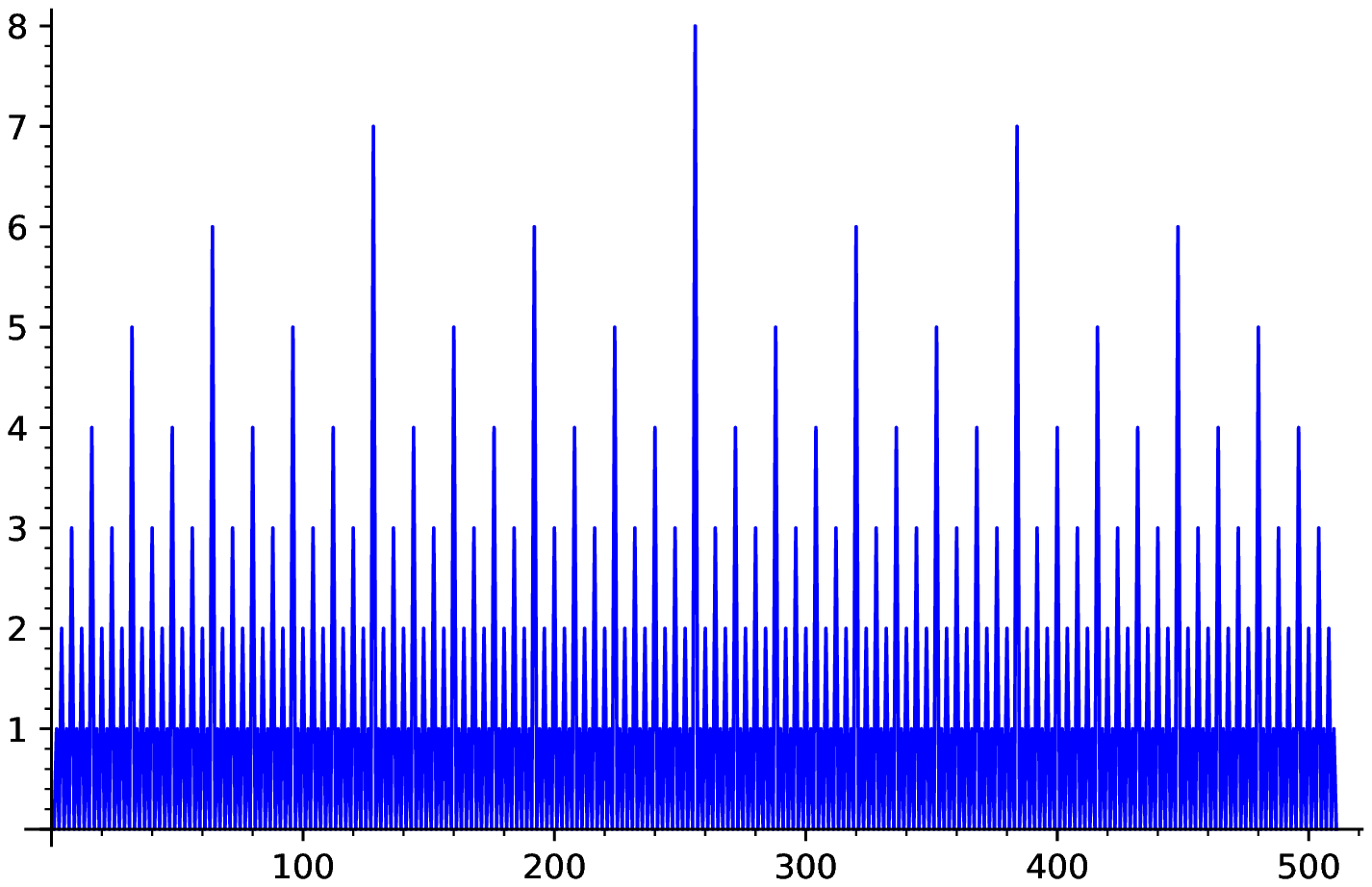}
    \includegraphics[scale=0.4]{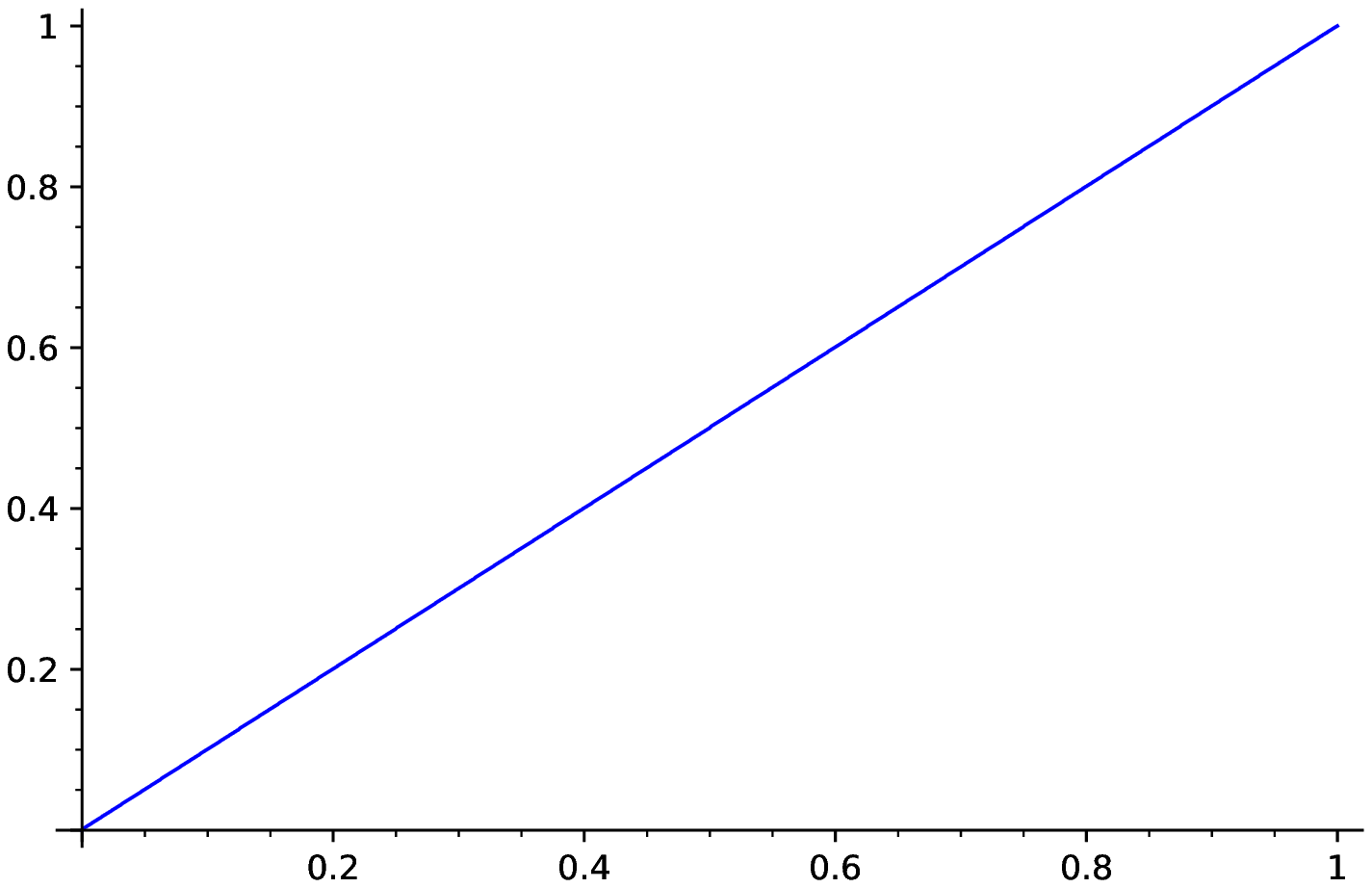}
    \caption{$r(n)$}
\end{figure}

\begin{figure}[h]
    \centering
    \includegraphics[scale=0.4]{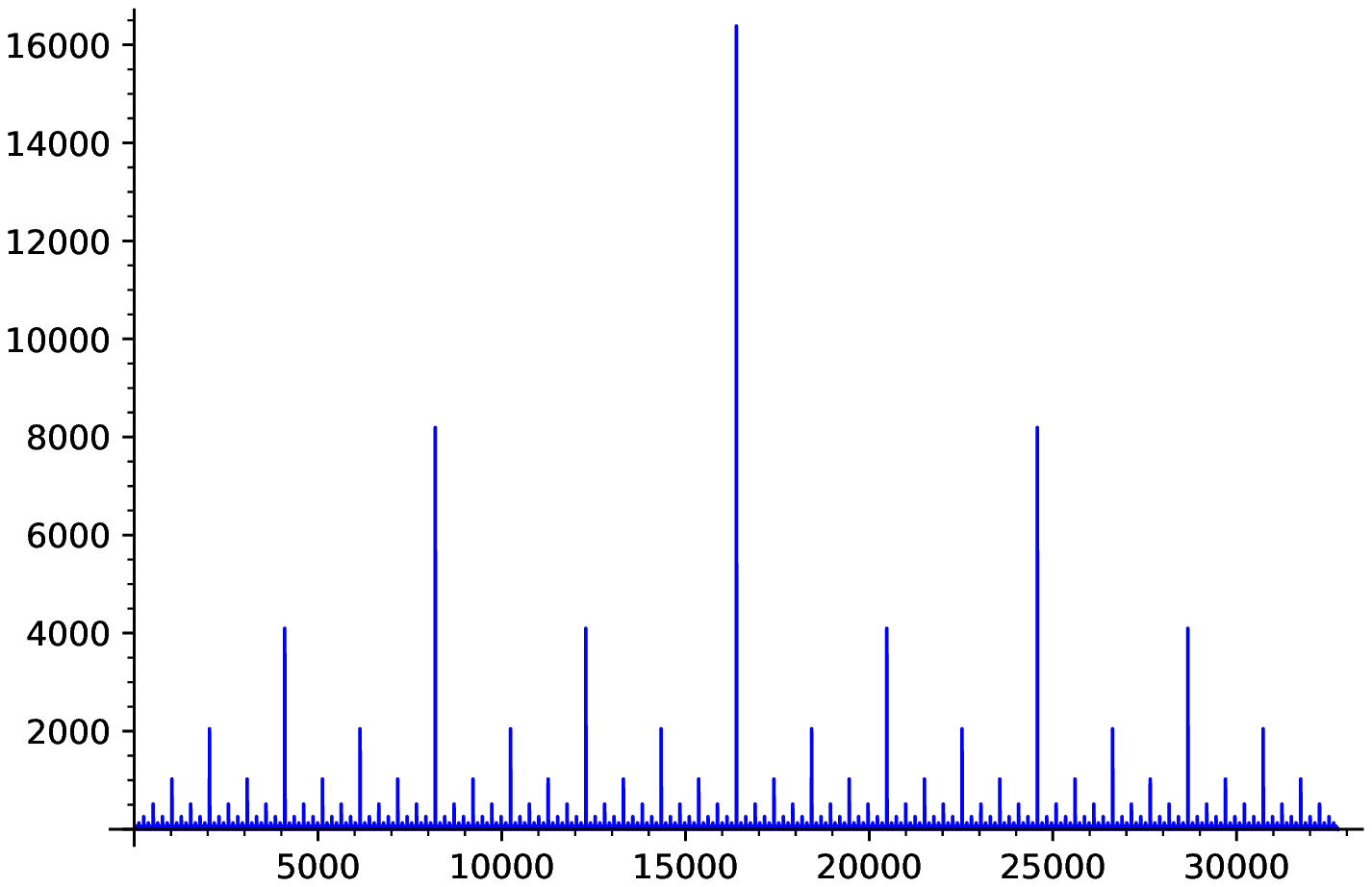}
    \includegraphics[scale=0.4]{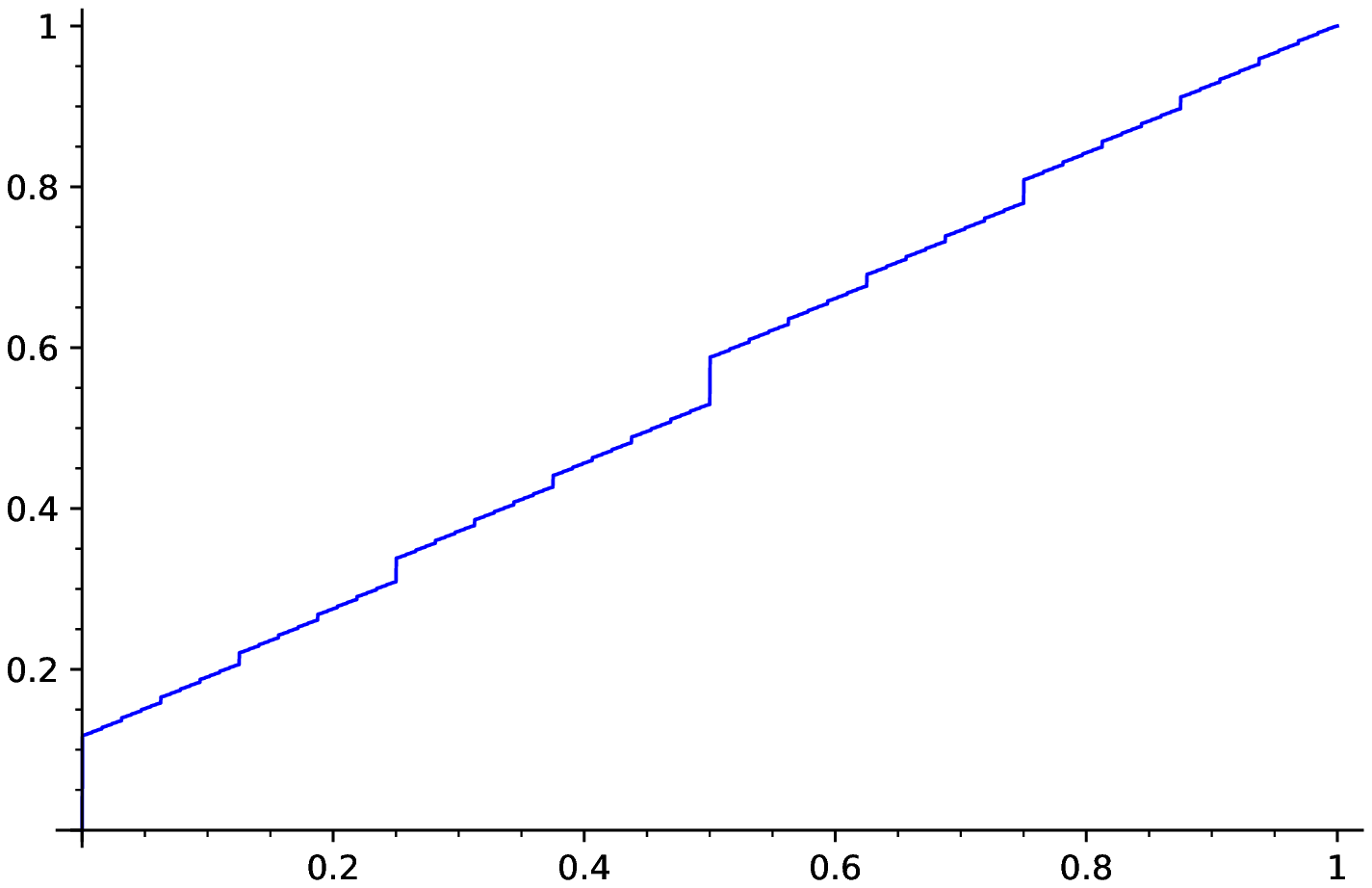}
    \caption{$R(n)$}
\end{figure}
\end{example}

\begin{example}[Missing digit examples] 
Consider the sequences $z(2n)=d\cdot z(n)$, $z(2n+1)=d\cdot z(n)+j$, $z(0)=0$, $d\geqslant 2$, $1 \leqslant j \leqslant d-1$. These enumerate the numbers with only $0$ and $j$ in their base $d$ expansion. Named examples include

\begin{itemize}
    \item The Cantor sequence: the numbers with no $1$ in their ternary expansion. It obeys $c(2n)=3c(n)$, $c(2n+1)=3c(n)+2$, $c(0)=0$.
    \item No arithmetic progressions: define $e(0)=0$, $e(1)=1$. Then $e(n)$, $n>1$, is the least integer not in arithmetic progression with any two previous terms. 
    The sequence obeys $e(2n)=3e(n)$, $e(2n+1)=3e(n)+1$, $e(0)=0$.
    \item The Moser--De Bruijn sequence: enumerates the numbers that are sums of distinct powers of $4$. Obeys $m(2n)=4m(n)$, $m(2n+1)=4m(n)+1$, $m(0)=0$.
\end{itemize}

\begin{figure}[h]
    \centering
    \includegraphics[scale=0.4]{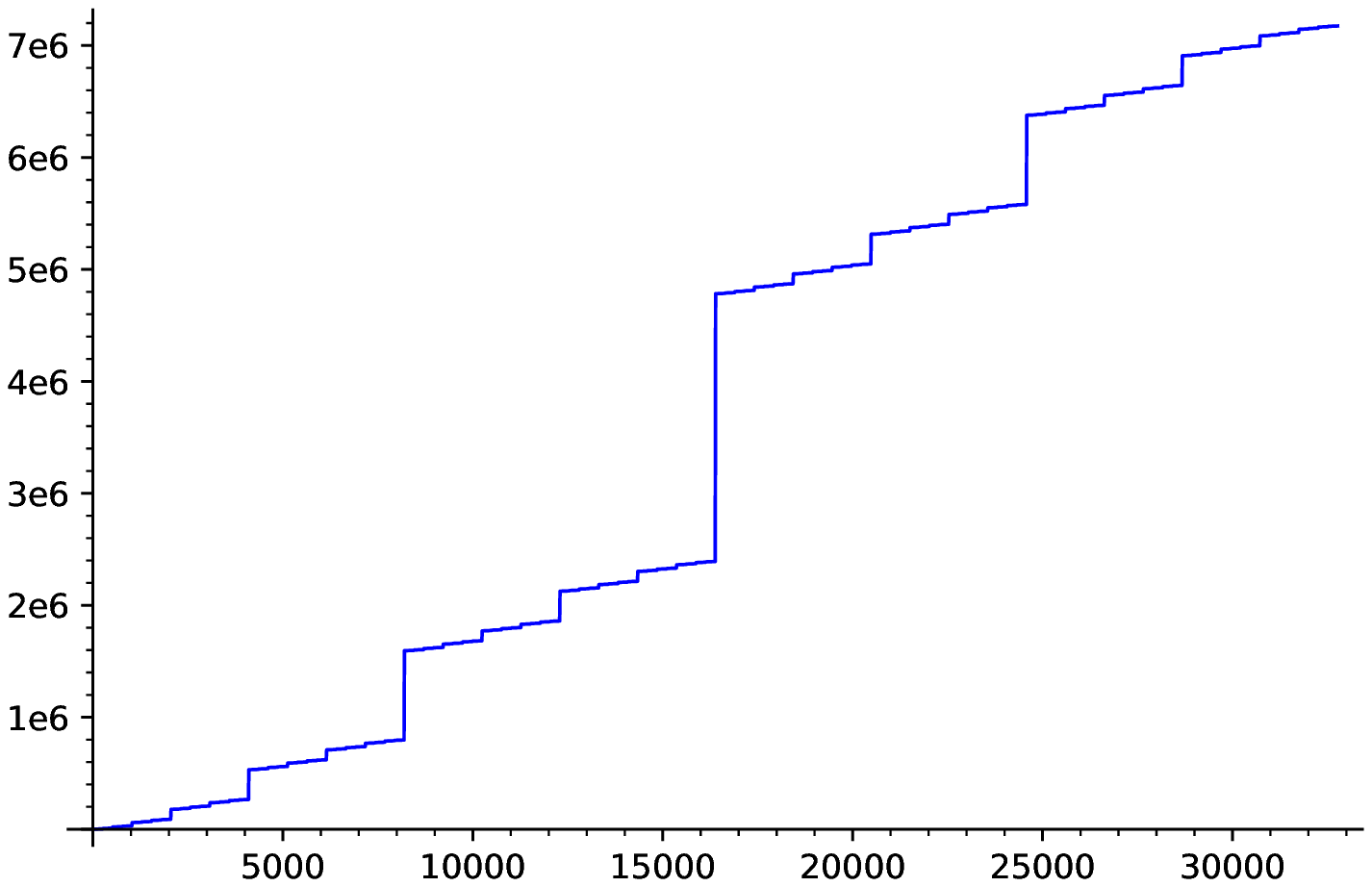}
    \includegraphics[scale=0.4]{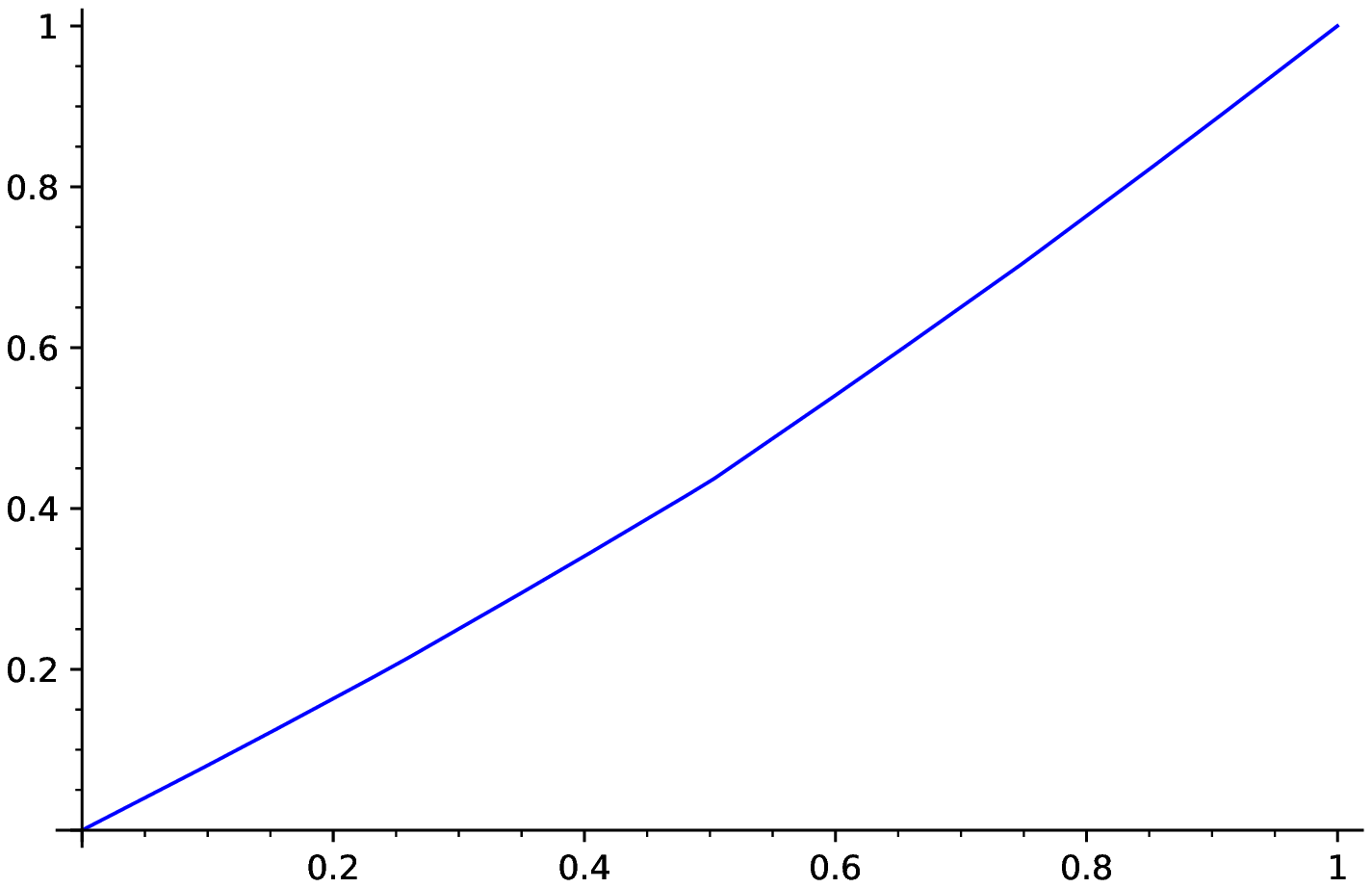}
\end{figure}

Their ghost measures are all absolutely continuous. This is clear because the sequences are strictly increasing, so $\mu_{N}$'s mass cannot be concentrating towards any measure zero set, ruling out a singular part.

\end{example}

\begin{example}[A trivial pure-point example]
Suppose that $N(n)=1$ if and only if $n=2^{k}$ for some $k$, and zero elsewise (i.e. $N(2n)= N(n)$, $N(2n+1)=0$, $N(1)=1$). Then for all $N$, $\mu_{N}$ (and so also $\mu$) equals $\delta_{0}$. This is probably the simplest example with a pure-point ghost measure, and it exemplifies the degenerate behaviour required for $\mu$ to have a pure point part.
\end{example}

The following sequences technically do not fall under Theorem \ref{THEOREM} because they have negative coefficients in their recurrence relations. We assume positivity so as to avoid complications in the proofs. But, with some more care, the same working applies to many more sequence than Theorem \ref{THEOREM}. That our work can deal with this very wide range of important examples emphasises its usefulness.

\begin{example}[The Josephus Sequence]
Probably the regular sequence with the most interesting (and violent) story associated to it \cite{Numberphile}. It obeys $J(2n)=2J(n)-1$, $J(2n+1)=2J(n)+1$, $J(1)=1$. Between powers of $2$ the sequence is linearly increasing ($J(2^n+k)=2k+1$, for  $0 \leqslant k <2^{n}$), so $\Sigma(N)=4^{n}$ and the ghost measure is absolutely continuous and equals $2x \cdot \lambda$.
\end{example}

\begin{example}[The Thue--Morse Sequence]
The Thue--Morse sequence is probably the most important and ubiquitous $2$-regular sequence, or sequence in general for that matter, see \cite{UbiquitousThueMorse}. Over the alphabet $\{0,1\}$ the sequence obeys $t(2n)=t(n)$, $t(2n+1)=1-t(n)$, $t(0)=0$. It is bounded (and hence 2-automatic) and is the fixed point of a primitive substitution system so it has been heavily analysed by other methods, making it interesting to compare with the ghost measure. It is well known that the Thue--Morse sequence has a uniformly converging natural density. Thus, for large $N$ the $\mu_{N}$ are essentially just a set of evenly distributed pure point masses, which is another proof that the ghost measure is Lebesgue measure. 

\end{example}

\section{The Lebesgue Decomposition of the Ghost Measure} \label{AffineClassification}

The goal of this paper is to understand the ghost measures of the affine $2$-regular sequences concretely, not just prove they exist in the abstract. First we understand the general character of the measures by determining their Lebesgue decomposition, as in Theorem \ref{THEOREM}. The proof, apart from \textbf{2C} and \textbf{2D}, is contained in this section. In the next section we study the exact shape of the measures.

\subsection{Proof of (most of) Theorem \ref{THEOREM}}

As a reminder, Theorem \ref{THEOREM} works by breaking the affine $2$-regular sequences up into the seven cases.

\begin{enumerate}
    \item The homogeneous case, $b_{0}=b_{1}=0$:
    
    \textbf{1A}: $A_{0}=A_{1}\neq 0$, \textbf{1B}: $A_{0}\neq A_{1}$, neither $0$, \textbf{1C}: $A_{0}$ or $A_{1}$ is $0$,
    \item The inhomogeneous case, $b_{0}+b_{1}\neq 0$:
    
    \textbf{2A}: $A_{0}+A_{1}\leqslant 2$, \textbf{2B}: $A_{0}=A_{1}>1$, \textbf{2C}: $A_{0}\neq A_{1}$, neither $0$, \textbf{2D}: $A_{0}$ or $A_{1}=0$, the other greater than or equal to $3$.
\end{enumerate}

The proof mostly consists of analysing the Fourier coefficients of the $\mu_{N}$. This is the tactic used in \cite{SternSequenceMeasure} and it suffices to prove the theorem apart from \textbf{2C} and \textbf{2D}. We begin by determining $\Sigma(N)=\sum_{n=0}^{2^{N}-1}f(2^{N}+n)$.

\begin{lem}
Let $A=A_{0}+A_{1}$ and $b=b_{0}+b_{1}$. Then
$$
\Sigma(N)=\begin{cases}
b2^{N-1} & A=0 \footnotemark\\
f(1)+b(2^{N}-1) & A=1\\
2^{N}f(1)+b\cdot N2^{N-1} & A=2\\
A^{N}f(1)+b\frac{A^{N}-2^{N}}{A-2} & A>2
\end{cases}.
$$
\footnotetext{Except possibly for $N=0$, depending on the value chosen for $f(1)$.}
\end{lem}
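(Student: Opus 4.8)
The plan is to reduce the problem to a single first-order linear recurrence in $N$ by exploiting the self-similar structure of the fundamental regions, and then solve that recurrence case by case. The key observation is that the map $m \mapsto \{2m, 2m+1\}$ is a bijection between $\mathcal{R}_{N}=[2^{N},2^{N+1}-1]$ and $\mathcal{R}_{N+1}=[2^{N+1},2^{N+2}-1]$: each integer $m$ with $N+1$ binary digits produces exactly the two integers $2m$ and $2m+1$ with $N+2$ binary digits, every element of $\mathcal{R}_{N+1}$ arises uniquely in this way, and a count confirms $2\lvert\mathcal{R}_{N}\rvert = 2\cdot 2^{N} = 2^{N+1} = \lvert\mathcal{R}_{N+1}\rvert$.

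Using this bijection together with the defining relations \eqref{eqn1}, the first step is to compute
\begin{align*}
\Sigma(N+1) = \sum_{m \in \mathcal{R}_{N}} \bigl( f(2m) + f(2m+1) \bigr) = \sum_{m \in \mathcal{R}_{N}} \bigl( A f(m) + b \bigr) = A\,\Sigma(N) + b\,2^{N},
\end{align*}
where $A=A_{0}+A_{1}$, $b=b_{0}+b_{1}$, and $\lvert\mathcal{R}_{N}\rvert = 2^{N}$. Together with the base case $\Sigma(0)=f(1)$ (since $\mathcal{R}_{0}=\{1\}$), this inhomogeneous recurrence determines $\Sigma(N)$ completely, and the rest of the proof is simply solving it.

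I would solve $\Sigma(N+1)=A\,\Sigma(N)+b\,2^{N}$ by the standard homogeneous-plus-particular method, splitting into the four ranges of $A$. For $A=0$ the recurrence collapses to $\Sigma(N+1)=b\,2^{N}$, giving $\Sigma(N)=b\,2^{N-1}$ for $N\geqslant 1$, with the base value $\Sigma(0)=f(1)$ accounting for the footnoted exception. For $A=1$ the recurrence telescopes to $f(1)+b\sum_{j=0}^{N-1}2^{j}=f(1)+b(2^{N}-1)$. For $A>2$ I would seek a particular solution of the form $\beta\,2^{N}$, which forces $\beta=-b/(A-2)$, and then fix the homogeneous constant $C$ in $C A^{N}+\beta 2^{N}$ from $\Sigma(0)=f(1)$ to recover $A^{N}f(1)+b\tfrac{A^{N}-2^{N}}{A-2}$.

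The only case needing a different ansatz is $A=2$, where the forcing term $2^{N}$ resonates with the homogeneous solution $C\,2^{N}$; here one must instead try $\alpha N\,2^{N}$, which yields $\alpha=b/2$ and hence the factor $N\,2^{N-1}$. This resonance is the one mild subtlety in an otherwise routine computation; everything else is bookkeeping to match the stated constants against the base value $\Sigma(0)=f(1)$, so I expect no serious obstacle beyond verifying the bijection carefully.
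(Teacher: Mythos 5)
Your proposal is correct and takes essentially the same approach as the paper: the paper likewise derives the first-order recurrence $\Sigma(N)=A\cdot\Sigma(N-1)+b2^{N-1}$ by splitting the sum over $\mathcal{R}_{N}$ into even and odd terms (your bijection $m\mapsto\{2m,2m+1\}$ is exactly this step), and then solves it from the base value $\Sigma(0)=f(1)$. The only difference is the routine final step: the paper unrolls the recurrence by induction into $A^{N}f(1)+b\left(2^{N-1}+2^{N-2}A+\cdots+A^{N-1}\right)$ and evaluates that geometric-type sum case by case, whereas you solve the recurrence by the homogeneous-plus-particular method with a resonance ansatz at $A=2$; both are correct and equally elementary.
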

\begin{proof}
We split the sum defining $\Sigma(N)$ by odd and even terms and apply $f(n)$'s defining relations to get $\Sigma(N) = A \cdot \Sigma(N-1)+b2^{N-1}$. Using induction, this says that $\Sigma(N)=A^{N}\Sigma(0)+b(2^{N-1}+2^{N-2}A + \cdots + 2A^{N-2}+A^{N-1})$. This immediately reduces to the required form, using $\Sigma(0)=f(1)$ and $x^{n-1}+x^{n-2}y+\cdots xy^{n-2} +y^{n-1} =\frac{x^{n}-y^{n}}{x-y}$ for $A>2$.
\end{proof}

It is convenient to factor out the $A^{N}$ from $\Sigma(N)$ and call the remainder $\sigma(N)$.
$$
\sigma(N)=\begin{cases}
f(1)+b(2^{N}-1) & A=1\\
f(1)+b\cdot \frac{N}{2} & A=2\\
f(1)+b\frac{1-(\frac{2}{A})^{N}}{A-2} & A>2.
\end{cases}
$$
When $A>2$, we label $\lim_{N \to \infty}\sigma(N) = f(1)+\frac{b}{A-2}$ by $\sigma(\infty)$.

\medskip

Given a finite Borel measure $\mu$ on $\mathbb{T}$, its $m^{th}$ Fourier coefficient, $m \in \mathbb{Z}$, is
$$
\widehat{\mu}(m)=\int_{\mathbb{T}}e^{-2 \pi i m x}d\mu(x).
$$
For absolutely continuous measures $d\mu=g d\lambda$ this reduces to the standard definition of the Fourier coefficients of the function $g$. The key theorem that we need is the following.

\begin{thm}[Levy's Continuity Theorem] \label{Convergence}\cite[Theorem 3.14]{BergForstPotentialTheory} 
A sequence of probability measures $\mu_{n}$ on $\mathbb{T}$ vaguely converges to $\mu$ if and only if for all $m \in \mathbb{Z}$, $\lim_{n \to \infty}\widehat{\mu_{n}}(m)$ exists and equals $\widehat{\mu}(m)$. These coefficients uniquely determine $\mu$.
\end{thm}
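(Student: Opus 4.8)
The plan is to reduce everything to the density of trigonometric polynomials in $C(\mathbb{T})$ together with the fact that each $\mu_n$ is a probability measure, so has uniformly bounded total mass. The forward implication is immediate: vague convergence means $\int g\, d\mu_n \to \int g\, d\mu$ for every continuous $g$, and the function $g(x)=e^{-2\pi i m x}$ is continuous on $\mathbb{T}$, so taking this particular $g$ yields $\widehat{\mu_n}(m)\to\widehat{\mu}(m)$ for each fixed $m$.

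For the reverse implication, I would suppose $\widehat{\mu_n}(m)\to\widehat{\mu}(m)$ for all $m\in\mathbb{Z}$. By linearity of the integral, $\int p\, d\mu_n \to \int p\, d\mu$ then holds for every trigonometric polynomial $p$, i.e. every finite linear combination of the characters $e^{2\pi i m x}$. The key step is to upgrade this to an arbitrary continuous $g$ via an $\epsilon/3$ approximation. Given $g\in C(\mathbb{T})$ and $\epsilon>0$, the Stone--Weierstrass theorem (this is where compactness of $\mathbb{T}$ enters) provides a trigonometric polynomial $p$ with $\|g-p\|_\infty<\epsilon$. Writing
$$\Bigl|\int g\, d\mu_n - \int g\, d\mu\Bigr| \le \Bigl|\int (g-p)\, d\mu_n\Bigr| + \Bigl|\int p\, d\mu_n - \int p\, d\mu\Bigr| + \Bigl|\int (p-g)\, d\mu\Bigr|,$$
the first and last terms are each at most $\|g-p\|_\infty<\epsilon$ because $\mu_n$ and $\mu$ have total mass $1$, while the middle term tends to $0$ by the polynomial case. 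Hence $\limsup_n \bigl|\int g\,d\mu_n-\int g\,d\mu\bigr|\le 2\epsilon$, and as $\epsilon$ was arbitrary we obtain vague convergence.

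The uniqueness clause follows from the same density argument: if two finite Borel measures on $\mathbb{T}$ share all Fourier coefficients, then they agree when integrated against every trigonometric polynomial, hence against every continuous function, and a finite Borel measure on a compact metric space is determined by its action on $C(\mathbb{T})$ via the Riesz representation theorem.

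The only substantive point — what might be called the main obstacle — is the density of trigonometric polynomials, which crucially relies on the compactness of $\mathbb{T}$. On a non-compact group such as $\mathbb{R}$ the analogous statement requires an additional tightness (or continuity-at-the-origin) hypothesis to prevent mass escaping to infinity; the torus removes this difficulty automatically, so no tightness argument is needed here and the uniform mass bound $\mu_n(\mathbb{T})=1$ suffices.
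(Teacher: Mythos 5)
Your proof is correct. The paper does not prove this theorem at all---it is quoted with a citation to Berg and Forst---so there is no internal argument to compare against. Your Stone--Weierstrass $\epsilon/3$ argument is the standard proof on a compact abelian group, and each step is sound; the only point worth making explicit is that the total mass bound $\mu(\mathbb{T})=1$ you use for the limit measure is itself a consequence of the hypothesis at $m=0$, since $\mu(\mathbb{T})=\widehat{\mu}(0)=\lim_{n}\widehat{\mu_n}(0)=1$.

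One refinement matters for how the theorem is actually used in this paper. As stated (and as you prove it), the candidate limit $\mu$ is given in advance. But in Lemma \ref{EXISTENCE} the paper concludes the \emph{existence} of a limit measure solely from the existence of the limits $\lim_{N}\widehat{\mu_N}(t)$, with no $\mu$ in hand. Your argument extends to cover this with one extra ingredient: the Borel probability measures on the compact metric space $\mathbb{T}$ are sequentially compact in the vague (weak-$*$) topology, so $(\mu_n)$ has vague limit points; by your forward direction every limit point has Fourier coefficients equal to the assumed limits, and by your uniqueness clause all limit points coincide, whence the full sequence converges to that common measure. This is precisely where your closing remark about compactness of $\mathbb{T}$ (versus the tightness hypothesis needed on $\mathbb{R}$) does concrete work rather than being merely a contextual observation.
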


We will need to know the Fourier coefficients of a Dirac delta measure;
\begin{align} \label{DeltaCoefficients}
\widehat{\delta_{x}}(m)=e^{-2 \pi i m x}.
\end{align}
The standard convolution theorem for Fourier coefficients shall also be useful
$$
\widehat{\mu * \nu}(m)=\widehat{\mu}(m)\widehat{\nu}(m).
$$
Using the definition of convolution of measures, or by comparing Fourier transforms and using Theorem \ref{Convergence}, we obtain $\delta_{x}*\delta_{y}=\delta_{x+y}$, where $x+y$ is modulo~$1$.

Our first task is to prove that the ghost measures of Theorem \ref{THEOREM} actually exist.

\begin{lem} \label{EXISTENCE}
The ghost measure $\mu$ exists for the sequences under consideration in Theorem \ref{THEOREM}.
\end{lem}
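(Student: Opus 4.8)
The plan is to apply Levy's Continuity Theorem (Theorem~\ref{Convergence}): since each $\mu_{N}$ is a probability measure, it suffices to show that for every $m \in \mathbb{Z}$ the limit $\lim_{N\to\infty}\widehat{\mu_{N}}(m)$ exists. Writing $S_{N}(m) = \Sigma(N)\,\widehat{\mu_{N}}(m) = \sum_{n=0}^{2^{N}-1} f(2^{N}+n)\,e^{-2\pi i m n/2^{N}}$, the first step is to derive a recursion for $S_{N}(m)$. Splitting the sum over even and odd $n$, writing $2^{N}+2j = 2(2^{N-1}+j)$ and $2^{N}+2j+1 = 2(2^{N-1}+j)+1$, and applying the defining relations \eqref{eqn1} collapses the even and odd parts into copies of $S_{N-1}(m)$, while the constants $b_{0}, b_{1}$ produce a geometric sum of $2^{N-1}$-th roots of unity. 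This yields
\begin{equation*}
S_{N}(m) = \bigl(A_{0} + A_{1} e^{-2\pi i m/2^{N}}\bigr) S_{N-1}(m) + \bigl(b_{0} + b_{1} e^{-2\pi i m/2^{N}}\bigr) 2^{N-1}\,\mathbf{1}[\,2^{N-1} \mid m\,].
\end{equation*}

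The case $m = 0$ is immediate, since $\widehat{\mu_{N}}(0) = 1$ for all $N$. For fixed $m \neq 0$ the root-of-unity sum vanishes as soon as $2^{N-1} > |m|$, so beyond some threshold $N_{0}$ the recursion becomes the pure product $S_{N}(m) = S_{N_{0}}(m)\prod_{j=N_{0}+1}^{N}\bigl(A_{0} + A_{1} e^{-2\pi i m/2^{j}}\bigr)$. The key step is then to control the normalised ratio $\widehat{\mu_{N}}(m) = S_{N}(m)/\Sigma(N)$. When $A = A_{0} + A_{1} > 0$ I would factor out $A^{N}$ from both, comparing $S_{N}(m)/A^{N}$ with $\Sigma(N) = A^{N}\sigma(N)$. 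The product $\prod_{j>N_{0}}\frac{A_{0} + A_{1} e^{-2\pi i m/2^{j}}}{A}$ converges absolutely because $\bigl|e^{-2\pi i m/2^{j}}-1\bigr| \leqslant 2\pi|m|/2^{j}$ is summable, so $S_{N}(m)/A^{N}$ tends to a finite limit. Combined with the behaviour of $\sigma(N)$ recorded after the preceding lemma, this settles every regime: for $A>2$ we have $\sigma(N)\to\sigma(\infty)=f(1)+\frac{b}{A-2}>0$ and the ratio converges; for $A\leqslant 2$ with $b>0$ the denominator $\Sigma(N)$ grows strictly faster than $A^{N}$ (like $2^{N}$ or $N2^{N}$), forcing $\widehat{\mu_{N}}(m)\to 0$; and for $A \leqslant 2$ with $b = 0$ one has $\Sigma(N)=A^{N} f(1)$, so the ratio again converges. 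The degenerate case $A = 0$ follows directly from the recursion, whose homogeneous factor is then absent, making $\widehat{\mu_{N}}(m) = 0$ for all large $N$.

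I expect the main obstacle to be organisational rather than analytical: the convergence of each $\widehat{\mu_{N}}(m)$ hinges on matching the growth order of the product $S_{N}(m)$ against that of the normaliser $\Sigma(N)$, and these orders differ across $A<2$, $A=2$ and $A>2$ and according to whether $b$ vanishes. The one point requiring genuine care is confirming that $\sigma(\infty)$ (equivalently $\Sigma(N)$) is strictly positive, which fails only for the all-zero sequence arising from $f(1)=b=0$; this is excluded since we require each $\mu_{N}$ to be a probability measure. Once positivity is in hand, the absolute convergence of the infinite product furnishes the limit of every Fourier coefficient, and Levy's theorem delivers the measure $\mu$, with $\widehat{\mu}(0)=1$ confirming that it is again a probability measure.
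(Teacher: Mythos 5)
Your proposal is correct and follows essentially the same route as the paper: split the sum over even and odd indices to obtain a recursion for the Fourier coefficients, observe that for $m \neq 0$ the Dirac-comb term vanishes once $2^{N-1} > |m|$, control the resulting infinite product against the growth of $\Sigma(N)$ in each regime of $A = A_0 + A_1$, and invoke Levy's continuity theorem. The differences are cosmetic: you work with the unnormalised sums $S_N(m)$ instead of $\widehat{\mu_N}(m)$, you justify product convergence via the summable bound $|e^{-2\pi i m/2^j}-1| \leqslant 2\pi|m|/2^j$ (slightly more rigorous than the paper's phrasing), and you handle $A_0+A_1=0$ directly, a case the paper defers to its treatment of \textbf{2A}.
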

\begin{proof}

We ignore the case $A_{0}+A_{1}=0$. Splitting by odd and even $n$ again gives
\begin{align*}
\mu_{N}&=\frac{1}{\Sigma(N)}\sum_{n=0}^{2^{N-1}-1}f(2^{N}+2n)\delta_{\frac{2n}{2^{N}}}+f(2^{N}+2n+1)\delta_{\frac{2n}{2^{N}}}*\delta_{\frac{1}{2^{N}}}\\
&=\frac{1}{\Sigma(N)}\sum_{n=0}^{2^{N-1}-1}(A_{0}f(2^{N-1}+n)+b_{0})\delta_{\frac{n}{2^{N-1}}}+(A_{1}f(2^{N-1}+n)+b_{1})\delta_{\frac{2n}{2^{N}}}*\delta_{\frac{1}{2^{N}}}\\
&=\frac{\Sigma(N-1)}{\Sigma(N)} \mu_{N-1} *(A_{0}\delta_{0}+A_{1}\delta_{\frac{1}{2^{N}}})+\frac{1}{\Sigma(N)}\left( \sum_{n=0}^{2^{N-1}-1}\delta_{\frac{n}{2^{N-1}}} \right)*(b_{0}\delta_{0}+b_{1}\delta_{\frac{1}{2^{N}}})\\
&=\frac{\sigma(N-1)}{\sigma(N)} \mu_{N-1} *\frac{A_{0}\delta_{0}+A_{1}\delta_{\frac{1}{2^{N}}}}{A_{0}+A_{1}}+\frac{1}{\sigma(N)}\left( \sum_{n=0}^{2^{N-1}-1}\delta_{\frac{n}{2^{N-1}}} \right)*\frac{b_{0}\delta_{0}+b_{1}\delta_{\frac{1}{2^{N}}}}{(A_{0}+A_{1})^{N}}.
\end{align*}
Next, we use Equation \ref{DeltaCoefficients} and linearity to calculate the Fourier coefficients of $\mu_{N}$. Only integer values of $t$ are relevant, so the sum term in the previous line turns into a multiple of the indicator function on $2^{N-1}\mathbb{Z}$;
$$\widehat{\mu_{N}}(t)=\frac{\sigma(N-1)}{\sigma(N)} \widehat{\mu_{N-1}}(t) \frac{A_{0}+A_{1}e^{\frac{-2 \pi i t}{2^{N}}}}{A_{0}+A_{1}}+\frac{1}{\sigma(N)}2^{N-1}\chi_{2^{N-1}\mathbb{Z}}(t)\frac{b_{0}+b_{1}e^{\frac{-2 \pi i t}{2^{N}}}}{(A_{0}+A_{1})^{N}}.$$
By induction we get the general formula

\begin{align} \label{GeneralFormula1}
\widehat{\mu_{N}}(t)&=\frac{\sigma(0)}{\sigma(N)} \prod_{n=1}^{N} \frac{A_{0}+A_{1}e^{\frac{-2 \pi i t}{2^{n}}}}{A_{0}+A_{1}}  \\
& +\frac{1}{\sigma(N)}\sum_{n=1}^{N}\left(2^{n-1}\chi_{2^{n-1}\mathbb{Z}}(t) \frac{b_{0}+b_{1}e^{\frac{-2 \pi i t}{2^{n}}}}{(A_{0}+A_{1})^{n}}\prod_{j=n+1}^{N} \frac{A_{0}+A_{1}e^{\frac{-2 \pi i t}{2^{j}}}}{A_{0}+A_{1}} \right). \nonumber
\end{align}

Taking the limit $N \to \infty$, we obtain

\begin{align} \label{GeneralFormula2}
\widehat{\mu_{N}}(t)&=\frac{\sigma(0)}{\sigma(\infty)} \prod_{n=1}^{\infty} \frac{A_{0}+A_{1}e^{\frac{-2 \pi i t}{2^{n}}}}{A_{0}+A_{1}}  \\
& +\frac{1}{\sigma(\infty)}\sum_{n=1}^{\infty}\left(2^{n-1}\chi_{2^{n-1}\mathbb{Z}}(t) \frac{b_{0}+b_{1}e^{\frac{-2 \pi i t}{2^{n}}}}{(A_{0}+A_{1})^{n}}\prod_{j=n+1}^{N} \frac{A_{0}+A_{1}e^{\frac{-2 \pi i t}{2^{j}}}}{A_{0}+A_{1}} \right). \nonumber
\end{align}

These limits actually exist. For $t=0$, $\widehat{\mu_{N}}(0)=1$ for all $N$ in all cases, essentially by definition of $\Sigma(N)$. So the limit exists at $t=0$, and $\widehat{\mu}(0)=1$, so $\mu$ is always a probability measure. For $t\neq 0$, there will be an $N$ such that $t \notin 2^{N-1}\mathbb{Z}$ and the sum truncates at this finite position. All of the products appearing have terms with magnitude less than $1$, and tending to $1$ as the index of the product increases, so they all converge. Then, because $\frac{1}{\sigma(N)}$ also converges (to a positive number or $0$), we see that $\widehat{\mu_{N}}(t)$ converges for all $t$. By Theorem \ref{Convergence}, the $\mu_{N}$ are vaguely converging to a measure $\mu$ with coefficients $\widehat{\mu}(t)$.
\end{proof}

\bigskip

We now treat the cases \textbf{1A}, \textbf{1B}, \textbf{1C}, \textbf{2A}, \textbf{2B} and part of \textbf{2C} of Theorem \ref{THEOREM} in a series of propositions.

\begin{prop}[\textbf{1A}]
If $A_{0}=A_{1}=A\neq 0$ and $b_{0}=b_{1}=0$, then $\mu$ is equal to Lebesgue measure.
\end{prop}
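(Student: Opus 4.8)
The plan is to read off $\widehat{\mu_N}(t)$ directly from the general formula \eqref{GeneralFormula1}, take $N\to\infty$, and then invoke Levy's continuity theorem (Theorem \ref{Convergence}) to identify the limit measure. The case hypotheses simplify \eqref{GeneralFormula1} dramatically. Since $b_0=b_1=0$, the entire second (sum) term vanishes. Moreover, with $b=b_0+b_1=0$, the function $\sigma(N)$ is constant: in each of the relevant regimes the $b$-dependent contribution to $\sigma(N)$ disappears, leaving $\sigma(N)=f(1)=\sigma(0)$, so the prefactor $\sigma(0)/\sigma(N)$ equals $1$. Finally, setting $A_0=A_1=A$ collapses each factor in the product, since $\frac{A+Ae^{-2\pi i t/2^n}}{2A}=\frac{1+e^{-2\pi i t/2^n}}{2}$. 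Thus the task reduces to analysing
$$
\widehat{\mu_N}(t)=\prod_{n=1}^{N}\frac{1+e^{-2\pi i t/2^n}}{2}.
$$

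Next I would evaluate the limit at each integer $t$. For $t=0$ every factor equals $1$, so $\widehat{\mu_N}(0)=1$ and the limit is $1$. For a nonzero integer $t$, the cleanest route is to locate an exactly vanishing factor using the $2$-adic valuation: write $t=2^{a}m$ with $m$ odd and $a=v_2(t)\geqslant 0$. At the index $n=a+1$ the exponent becomes $e^{-2\pi i t/2^{a+1}}=e^{-\pi i m}=(-1)^m=-1$, so that factor is $\tfrac{1}{2}(1+(-1))=0$. Hence $\widehat{\mu_N}(t)=0$ for every $N\geqslant a+1$, and therefore $\lim_{N\to\infty}\widehat{\mu_N}(t)=0$. (Alternatively, one can write each factor as $e^{-\pi i t/2^n}\cos(\pi t/2^n)$ and telescope the cosine product via the identity $\prod_{n=1}^{N}\cos(\theta/2^n)=\frac{\sin\theta}{2^N\sin(\theta/2^N)}$ with $\theta=\pi t$; the factor $\sin(\pi t)=0$ for integer $t\neq 0$ gives the same conclusion, and in the limit this recovers the familiar $\frac{\sin(\pi t)}{\pi t}$.)

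Collecting the two cases, $\widehat{\mu}(0)=1$ and $\widehat{\mu}(t)=0$ for all $t\neq 0$, which are precisely the Fourier coefficients of Lebesgue measure $\lambda$ on $\mathbb{T}$. Since these coefficients determine the measure uniquely (Theorem \ref{Convergence}), we conclude $\mu=\lambda$, as required. I do not anticipate a genuine obstacle here: the argument is essentially a bookkeeping specialisation of \eqref{GeneralFormula1} followed by an elementary product evaluation. The only point requiring mild care is verifying that $\sigma(N)$ is genuinely constant across the subcases $A=2$ and $A>2$ of the $\Sigma(N)$ lemma when $b=0$, so that the prefactor really is identically $1$; once that is confirmed the vanishing-factor observation finishes the proof immediately.
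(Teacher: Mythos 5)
Your proof is correct, and it shares the paper's endgame: show that $\widehat{\mu_{N}}(t)\to\chi_{\{0\}}(t)=\widehat{\lambda}(t)$ pointwise and invoke Theorem \ref{Convergence}. The route to the coefficients differs, though. The paper never touches the general formula \eqref{GeneralFormula1}; it simply notes that with $b_{0}=b_{1}=0$ and $A_{0}=A_{1}=A$ the sequence is constant on each fundamental region, $f(2^{N}+n)=f(1)A^{N}$, so $\mu_{N}$ is exactly the uniform Dirac comb $\frac{1}{2^{N}}\sum_{n=0}^{2^{N}-1}\delta_{n/2^{N}}$, whose coefficients are $\chi_{2^{N}\mathbb{Z}}(t)$. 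Your product is this same function in disguise: setting $w=e^{-2\pi i t/2^{N}}$ and telescoping,
$$
\prod_{n=1}^{N}\frac{1+e^{-2\pi i t/2^{n}}}{2}=\frac{1}{2^{N}}\prod_{k=0}^{N-1}\left(1+w^{2^{k}}\right)=\frac{1-w^{2^{N}}}{2^{N}(1-w)}=\chi_{2^{N}\mathbb{Z}}(t)
$$
for integer $t$ with $w\neq 1$, the value being $1$ when $w=1$. What the paper's route buys is economy: it is self-contained, needs no appeal to Lemma \ref{EXISTENCE} or to the bookkeeping of $\sigma(N)$, and it identifies the approximants $\mu_{N}$ themselves (not merely their transforms) as uniform combs. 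What your route buys is uniformity: the template of specialising \eqref{GeneralFormula1} and locating the vanishing factor at $n=a+1$, where $t=2^{a}b$ with $b$ odd, is exactly the mechanism the paper reuses in \textbf{2A} and \textbf{2B}, so nothing in your argument is wasted. The one point of care you flagged does check out: when $b=0$ the lemma for $\Sigma(N)$ gives $\sigma(N)=f(1)=\sigma(0)$ in both relevant subcases $A_{0}+A_{1}=2$ and $A_{0}+A_{1}>2$ (and $A_{0}+A_{1}$ is even and nonzero here, so no other subcase arises), hence the prefactor is genuinely $1$.
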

\begin{proof}
In this case $f(n)$ equals $f(1)\cdot A^{N}$ between $2^{N}$ and  $2^{N+1}-1$. Hence for every $N$, $\mu_{N}$ equals the evenly spread Dirac comb $\frac{1}{2^{N}} \sum_{n=0}^{2^{N}-1} \delta_{\frac{n}{2^{N}}}$. This sequence vaguely converges to Lebesgue measure. One way to show this is to observe that $\widehat{\mu_{N}} = \chi_{2^{N}\mathbb{Z}}(t)$, which converges pointwise to $\chi_{\{0\}}=\widehat{\lambda}$.
\end{proof}

\begin{prop}[\textbf{1B}]
If $A_{0}\neq A_{1}$, neither equal $0$, and $b_{0}=b_{1}=0$, then $\mu$ is singular continuous.
\end{prop}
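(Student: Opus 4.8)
The plan is to read off the homogeneous specialisation of \eqref{GeneralFormula2} and then work entirely with the resulting convolution structure. Since $A_{0},A_{1}\neq 0$ forces $A=A_{0}+A_{1}\geqslant 3>2$, and $b=0$ makes $\sigma(N)\equiv f(1)$ constant, the inhomogeneous sum in \eqref{GeneralFormula2} vanishes and the prefactor is $1$, leaving $\widehat{\mu}(t)=\prod_{n=1}^{\infty}\frac{A_{0}+A_{1}e^{-2\pi i t/2^{n}}}{A_{0}+A_{1}}$. Equivalently, writing $p=\frac{A_{0}}{A_{0}+A_{1}}$ and $q=\frac{A_{1}}{A_{0}+A_{1}}$, the measure is the infinite Bernoulli-type convolution $\mu=\Conv_{n=1}^{\infty}\bigl(p\,\delta_{0}+q\,\delta_{1/2^{n}}\bigr)$. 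I would run the whole argument off this description, with the hypothesis $A_{0}\neq A_{1}$ entering precisely as $p\neq q$, i.e.\ as $4pq<1$ and $p^{2}+q^{2}<1$. The goal is to show $\mu$ is atomless and concentrated on a Lebesgue-null set.

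For continuity I would apply Wiener's criterion, $\sum_{x}\mu(\{x\})^{2}=\lim_{T\to\infty}\frac{1}{2T+1}\sum_{|t|\leqslant T}|\widehat{\mu}(t)|^{2}$. Using $|p+qe^{i\theta}|^{2}=1-4pq\sin^{2}(\theta/2)$ gives $|\widehat{\mu}(t)|^{2}=\prod_{n\geqslant 1}\bigl(1-4pq\sin^{2}(\pi t/2^{n})\bigr)$, with every factor in $(0,1]$, so $|\widehat{\mu}(t)|^{2}\leqslant P_{M}(t):=\prod_{n=1}^{M}\bigl(1-4pq\sin^{2}(\pi t/2^{n})\bigr)$ for each $M$. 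Now $P_{M}$ is $2^{M}$-periodic and equals $|\widehat{\mu_{M}}(t)|^{2}$ for the finite convolution $\mu_{M}=\Conv_{n=1}^{M}(p\,\delta_{0}+q\,\delta_{1/2^{n}})$, which is supported on $\{j/2^{M}\}$ with weights $p^{M-|S|}q^{|S|}$ indexed by subsets $S\subseteq\{1,\dots,M\}$. Discrete Parseval then gives the mean $\frac{1}{2^{M}}\sum_{t=0}^{2^{M}-1}P_{M}(t)=\sum_{S}(p^{M-|S|}q^{|S|})^{2}=(p^{2}+q^{2})^{M}$. Hence the Cesàro average of $|\widehat{\mu}|^{2}$ is bounded above by $(p^{2}+q^{2})^{M}$ for every $M$, and since $p^{2}+q^{2}<1$ this tends to $0$; so $\mu$ has no atoms.

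For singularity I would pass to the probabilistic reading of the same convolution: $\mu$ is the law of $X=\sum_{n\geqslant 1}b_{n}2^{-n}$ with $b_{n}$ independent and $\mathbb{P}(b_{n}=1)=q$. Thus the binary digits of a $\mu$-random point are i.i.d.\ Bernoulli$(q)$, and the strong law of large numbers yields, $\mu$-almost surely, a digit frequency converging to $q\neq\tfrac12$. The set of such $x$ is Lebesgue-null (Borel's normal number theorem: almost every point has frequency $\tfrac12$), so $\mu$ is carried by a null set, i.e.\ $\mu\perp\lambda$. Combined with the absence of atoms, this makes $\mu$ singular continuous.

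I expect the only genuinely delicate point to be the exclusion of an absolutely continuous component. A purely Fourier-analytic route shows $\mu$ is not \emph{purely} absolutely continuous: evaluating at $t=2^{N}$ kills every factor with $n\leqslant N$ (the argument $\pi 2^{N-n}$ is an integer multiple of $\pi$) and leaves $|\widehat{\mu}(2^{N})|^{2}=\prod_{k\geqslant 1}\bigl(1-4pq\sin^{2}(\pi/2^{k})\bigr)$, a positive constant independent of $N$, contradicting the Riemann--Lebesgue lemma. However, non-vanishing Fourier coefficients do not by themselves rule out a nonzero a.c.\ part coexisting with an s.c.\ part; closing this gap needs either the Jessen--Wintner purity law for infinite convolutions of discrete measures (forcing $\mu$ to be of pure type) or the direct concentration argument above. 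I would favour the strong-law/normal-number argument, since it delivers full singularity at once and is self-contained, avoiding any appeal to purity.
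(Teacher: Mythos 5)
Your proof is correct, but it takes a genuinely different route from the paper's. The paper also starts from the infinite convolution $\mu=\Conv_{n=1}^{\infty}\frac{A_{0}\delta_{0}+A_{1}\delta_{1/2^{n}}}{A_{0}+A_{1}}$ and its product Fourier transform, but then argues in three steps: it invokes the Jessen--Wintner purity theorem for convergent infinite convolutions of pure point measures (so $\mu$ is purely p.p., purely s.c., or purely a.c.), rules out absolute continuity by exactly the Riemann--Lebesgue observation you describe ($\widehat{\mu}(1)\neq 0$ together with $\widehat{\mu}(2t)=\widehat{\mu}(t)$), and rules out atoms by adapting the Stern-sequence argument of Baake--Coons: a recursive inequality $\sigma_{N}\leqslant \sigma_{N-1}-\frac{1-\kappa}{4}\sigma_{N-2}$, with $\kappa<1$, for the Wiener averages $\sigma_{N}=\frac{1}{2^{N}}\sum_{n=1}^{2^{N}}\abs{\widehat{\mu}(n)}^{2}$, which forces geometric decay. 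You replace both halves. For continuity, your discrete-Parseval bound --- $\abs{\widehat{\mu}(t)}^{2}\leqslant P_{M}(t)$ with $P_{M}$ a $2^{M}$-periodic trigonometric product whose period mean is $(p^{2}+q^{2})^{M}$ --- is a cleaner, fully explicit way to drive the Wiener averages to zero, avoiding the imported monotonicity/$\kappa$ estimates from the Stern paper. For singularity, your strong-law/normal-number argument yields mutual singularity $\mu\perp\lambda$ outright, so you never need the purity law at all; and you correctly flagged the subtlety that Riemann--Lebesgue alone only excludes a purely absolutely continuous measure, which is precisely the gap the paper closes with Jessen--Wintner. It is worth noting that your probabilistic argument is the same mechanism the paper itself deploys later (Section \ref{SHAPE}) to finish case \textbf{2C}: digit densities plus Borel's normal number theorem. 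Your proof can thus be read as specialising that idea to the homogeneous case, where $\mu$ becomes a classical Bernoulli-type convolution, the digits of a $\mu$-random point are i.i.d.\ Bernoulli$(q)$ with $q\neq \tfrac{1}{2}$, and the whole argument is self-contained.
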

\begin{proof}
The proof that the ghost measure of Stern's sequence is singular continuous works, almost without alteration. One can read \cite{SternSequenceMeasure} for the full details. The proof relies on the representation of $\mu$ and its Fourier transforms by convergent infinite products (both of which follow from the formulas of Lemma \ref{EXISTENCE}) 
$$
\mu=\Conv_{n=1}^{\infty}\frac{A_{0}\delta_{0}+A_{1}\delta_{\frac{1}{2^{n}}}}{A_{0}+A_{1}},\quad \widehat{\mu}=\prod_{n=1}^{\infty}\frac{A_{0}+A_{1}e^{\frac{-2 \pi i t}{2^{n}}}}{A_{0}+A_{1}}.
$$

Because $\mu$ is represented by a convergent infinite convolution product of pure point measures, it has pure type. That is, only one of $\mu_{ac},\mu_{sc},\mu_{pp}$ is non-zero \cite[Theorem 35]{BorgeJessenDistributionsAndZetaFunction}. To prove that it is not absolutely continuous, one first shows that $\widehat{\mu}(1)\neq 0$ and combines with $\widehat{\mu}(t)=\widehat{\mu}(2t)$ for all $t\in \mathbb{Z}$ (both easy results) to show that infinitely many coefficients have the same non-zero value. However, the Riemann-Lebesgue lemma states that the Fourier coefficients of an absolutely continuous measure limits to zero as $\abs{t}$ tends to infinity. 

The only change to the proof that $\mu$ is not pure point is that the original has 

$$
\frac{\max_{t \in [0,\frac{2}{5}]}(\abs{\widehat{\mu}(1-t)}^{2})}{\min_{t \in [0,\frac{2}{5}]}(\abs{\widehat{\mu}(t)}^{2})}=\kappa<\frac{1}{16},
$$
while we can only prove that $\kappa<1$ because $\abs{\widehat{\mu}(t)}^{2}=\prod_{k=1}^{\infty}\frac{A_{0}^{2}+A_{1}^{2}+2A_{0}A_{1}\cos(\frac{\pi t}{2^{k}})}{(A_{0}+A_{1})^{2}}$ is strictly decreasing in $[0,1]$. However, this is sufficient for our needs. The proof in \cite{SternSequenceMeasure} defines $\sigma_{N}=\frac{1}{2^{N}}\sum_{n=1}^{2^{N}}\abs{\widehat{\mu}(n)}^{2}$, and then proves that (for $N>2$) $\sigma_{N}\leqslant  \sigma_{N-1}-\frac{1-\kappa}{4}\sigma_{N-2}$. Substituting for $\sigma_{N-1}$ gives $\sigma_{N} \leqslant \frac{3+\kappa}{4}\sigma_{N-2}$. Inductively, we then have $\sigma_{N} \leqslant \left(\frac{3+\kappa}{4}\right)^{\frac{N}{2}}\max(\sigma_{0},\sigma_{1})$, which tends to $0$ as $N$ tends to infinity because $\kappa<1$. This is sufficient to prove the result, by using Wiener's criterion \cite[Proposition 8.9]{baake_grimm_2017}, that $\sigma_{N}$ tends to zero as $N$ tends to infinity exactly when the measure has no pure point part.
\end{proof}

\begin{prop}[\textbf{1C}]
If $A_{0}$ or $A_{1}$ is $0$ and $b_{0}=b_{1}=0$, then $\mu$ is pure point, and equals $\delta_{0}$.
\end{prop}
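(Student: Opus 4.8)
The plan is to exploit the extreme degeneracy forced by a vanishing multiplier: in the homogeneous case, one of the relations being zero annihilates almost every term of the sequence, leaving a single surviving pure point in each fundamental region.

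First I would note that $b_0=b_1=0$ together with the standing assumption that the coefficients are not all zero forces \emph{exactly} one of $A_0,A_1$ to vanish, the other being nonzero (if both $A_i$ were zero, all four coefficients would vanish). The two sub-cases behave differently and I would treat them separately. Suppose $A_1=0$, so $A_0\neq 0$. The relation $f(2n+1)=A_1f(n)=0$ shows $f$ vanishes on every odd integer $\geq 3$, and an immediate induction using $f(2n)=A_0f(n)$ shows the only surviving values are $f(2^k)=A_0^k f(1)$. Hence in $\mathcal{R}_N=[2^N,2^{N+1})$ the unique nonzero term is $f(2^N)$, which sits at the lattice point indexed by $n=0$; thus $\Sigma(N)=A_0^N f(1)$ and $\mu_N=\delta_0$ identically, so $\mu=\delta_0$.

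Suppose instead $A_0=0$, so $A_1\neq 0$. Now $f(2n)=0$ for $n\geq 1$, and tracing the backward chain $m\mapsto (m-1)/2$ shows the surviving indices are exactly the all-ones binary strings, with $f(2^{k+1}-1)=A_1^k f(1)$. In $\mathcal{R}_N$ the unique nonzero term is therefore $f(2^{N+1}-1)$, located at the point $\frac{(2^{N+1}-1)-2^N}{2^N}=1-2^{-N}$, so $\mu_N=\delta_{1-2^{-N}}$. Since $1-2^{-N}\to 0$ in $\mathbb{T}$, these converge vaguely to $\delta_0$, giving $\mu=\delta_0$ once more.

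The same conclusion can also be read straight off Lemma \ref{EXISTENCE}: with $b_0=b_1=0$ the second sum drops out and $\sigma(N)\equiv f(1)$, leaving $\widehat{\mu}(t)=\prod_{n=1}^{\infty}(A_0+A_1 e^{-2\pi i t/2^n})/(A_0+A_1)$. When $A_1=0$ every factor equals $1$; when $A_0=0$ the product collapses to $\exp\bigl(-2\pi i t\sum_{n\geq 1}2^{-n}\bigr)=e^{-2\pi i t}=1$ for every $t\in\mathbb{Z}$. Either way $\widehat{\mu}\equiv 1=\widehat{\delta_0}$, and Theorem \ref{Convergence} finishes the argument. There is no genuine obstacle here; the only point demanding care is the $A_0=0$ case, where the lone surviving mass sits not at $0$ but drifts to the right endpoint $1-2^{-N}$, so that identifying the limit requires the vague convergence $\delta_{1-2^{-N}}\to\delta_0$ on the torus rather than an exact equality at finite $N$.
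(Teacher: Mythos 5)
Your proposal is correct and takes essentially the same approach as the paper: in each sub-case the sequence has exactly one nonzero term per fundamental region, giving $\mu_{N}=\delta_{0}$ identically when $A_{1}=0$, and $\mu_{N}=\delta_{1-2^{-N}}$ converging vaguely to $\delta_{0}$ on $\mathbb{T}$ when $A_{0}=0$. Your supplementary verification via the Fourier coefficients of Lemma \ref{EXISTENCE} (showing $\widehat{\mu}\equiv 1=\widehat{\delta_{0}}$) is a valid extra check that the paper does not include, but the core argument is identical.
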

\begin{proof}
If $A_{0}\neq 0$ and $A_{1}=0$, then $f(n)=0$ except at $n=2^{N}$ where it equals $f(1)\cdot A_{0}^{N}$. Then all $\mu_{N}$ are equal to $\delta_{0}$, and hence so is $\mu$. Similarly if $A_{0}=0$ and $A_{1} \neq 0$, the sequence is only non zero at $n=2^{N}-1$. So $\mu_{N}=\delta_{\frac{2^{N}-1}{2^{N}}}=\delta_{-\frac{1}{2^{N}}}$ which vaguely converges to $\delta_{0}$ as well.
\end{proof}

\begin{prop}[\textbf{2A}]
If $A_{0}+A_{1}\leqslant 2$ and $b_{0}+b_{1}\neq 0$, then $\mu=\lambda$.
\end{prop}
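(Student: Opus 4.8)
The plan is to compute the limiting Fourier coefficients $\widehat{\mu}(t)$ and show they coincide with those of Lebesgue measure, $\widehat{\lambda}(t)=\chi_{\{0\}}(t)$, after which the uniqueness assertion in Theorem \ref{Convergence} forces $\mu=\lambda$. Since $\widehat{\mu}(0)=1$ already holds in all cases (as noted in the proof of Lemma \ref{EXISTENCE}), the whole task reduces to proving $\widehat{\mu}(t)=0$ for every $t\neq 0$. The decisive structural feature I would exploit is that, with $A=A_{0}+A_{1}$ and $b=b_{0}+b_{1}\geqslant 1$, the normalising factor $\sigma(N)$ diverges: for $A=1$ we have $\sigma(N)=f(1)+b(2^{N}-1)$ and for $A=2$ we have $\sigma(N)=f(1)+b\tfrac{N}{2}$, both tending to infinity. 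This is exactly what separates the present regime from $A>2$, where $\sigma(\infty)$ is finite and $\mu$ is nontrivial.

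For $A\in\{1,2\}$ I would work directly from the general formula \eqref{GeneralFormula1} and let $N\to\infty$. The first summand is $\tfrac{\sigma(0)}{\sigma(N)}\prod_{n=1}^{N}\tfrac{A_{0}+A_{1}e^{-2\pi i t/2^{n}}}{A}$; each factor of the product has modulus at most $1$, so the product is bounded, while $\sigma(0)/\sigma(N)\to 0$, and this term vanishes. For the second summand, fix $t\neq 0$ and let $s$ be its $2$-adic valuation. The indicator $\chi_{2^{n-1}\mathbb{Z}}(t)$ then forces $n\leqslant s+1$, so the sum carries at most $s+1$ nonzero terms regardless of $N$; each is a fixed constant times a partial product of modulus at most $1$. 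Hence the sum stays bounded by some $C(t)$ independent of $N$, and dividing by $\sigma(N)\to\infty$ sends the second term to $0$ as well. Thus $\widehat{\mu}(t)=0$ for all $t\neq 0$.

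The case $A=0$ (that is, $A_{0}=A_{1}=0$) must be handled separately, since it was excluded from Lemma \ref{EXISTENCE}. Here the relations collapse: for $N\geqslant 1$ one finds $f(2^{N}+m)=b_{0}$ when $m$ is even and $b_{1}$ when $m$ is odd, so $\mu_{N}$ is an explicit alternating Dirac comb of total mass $\Sigma(N)=b\,2^{N-1}$. Splitting the Fourier sum by parity of $m$ and evaluating the geometric sum $\sum_{l=0}^{2^{N-1}-1}e^{-2\pi i t l/2^{N-1}}$ (which is $2^{N-1}$ when $2^{N-1}\mid t$ and $0$ otherwise) yields the closed form $\widehat{\mu_{N}}(t)=\tfrac{b_{0}+b_{1}e^{-2\pi i t/2^{N}}}{b}\,\chi_{2^{N-1}\mathbb{Z}}(t)$. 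For any fixed $t\neq 0$ this is $0$ once $N$ is large, while it is $1$ at $t=0$, so again $\widehat{\mu}=\widehat{\lambda}$.

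The argument is mostly bookkeeping once the right observation is isolated, and I expect no subtle cancellation to be needed. The only genuinely delicate point is recognising that the support of the inhomogeneous sum in the index $n$ is finite (through $\chi_{2^{n-1}\mathbb{Z}}$), so its numerator is bounded uniformly in $N$; the mere divergence of $\sigma(N)$ then annihilates every nonzero frequency. In other words, the growth of the normalisation does all the work, which is precisely why the limiting measure flattens out to $\lambda$.
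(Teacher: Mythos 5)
Your proposal is correct and follows essentially the same route as the paper: the $A_{0}=A_{1}=0$ case is handled by the identical explicit computation giving $\widehat{\mu_{N}}(t)=\frac{b_{0}+b_{1}e^{-2\pi i t/2^{N}}}{b_{0}+b_{1}}\chi_{2^{N-1}\mathbb{Z}}(t)$, and for $A_{0}+A_{1}\in\{1,2\}$ the paper likewise uses Equation \eqref{GeneralFormula1}, the truncation of the sum at the $2$-adic valuation of $t$, and the divergence of $\sigma(N)$ to conclude $\widehat{\mu}(t)=0$ for $t\neq 0$. Your observation that the divergence of the normalisation "does all the work" is exactly the mechanism of the paper's argument.
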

\begin{proof}
First consider $A_{0}=A_{1}=0$. Then $f(n)=b_{0}$ if $n$ is even, and $b_{1}$ if $n$ is odd (except possibly for $f(1)$). Thus (forgetting about $N=0$)
$$\mu_{N}=\frac{1}{2^{N-1}(b_{0}+b_{1})}\left(\sum_{n=0}^{2^{N-1}-1} \delta_{\frac{n}{2^{N-1}}}\right)*(b_{0}\delta_{0}+b_{1}\delta_{\frac{1}{2^{N}}}).$$
The Fourier transform is
$$
\widehat{\mu_{N}}(t)=\frac{b_{0}+b_{1}e^{-\frac{2 \pi i t }{2^{N}}}}{b_{0}+b_{1}}\chi_{2^{N-1}\mathbb{Z}}(t).
$$
For any finite $t \neq 0$, the coefficients converge to zero. For $t=0$, $\widehat{\mu_{N}}(0)$ is always $1$. Hence $\widehat{\mu_{N}}$ converges to $\chi_{\{0\}}$, and hence $\mu_{N}$ vaguely converges to Lebesgue measure. For $A_{0}+A_{1}=1$ or $2$, take Equation \ref{GeneralFormula1}. If $t=2^{a}b$ where $b$ is odd, the sum truncates at $N=a+1$. In particular, $\widehat{\mu_{N}}$ has magnitude bounded by

$$
\abs{\widehat{\mu_{N}}(t)} \leqslant \frac{1}{\sigma(N)} \left(\sigma(0) +\sum_{n=1}^{a+1}2^{n-1} \frac{b_{0}+b_{1}}{(A_{0}+A_{1})^{n}}\right).
$$
The bracketed term is eventually independent of $N$. Since $\sigma(N)$ tends to infinity with $N$, $\widehat{\mu}(t)=0$ for all $t\neq 0$. So the $\mu_{N}$ converge to Lebesgue measure. 
\end{proof}

\begin{prop}[\textbf{2B}]
If $A_{0}=A_{1}>1$ and $b_{0}+b_{1}\neq 0$, then $\mu$ is absolutely continuous.
\end{prop}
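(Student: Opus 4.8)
The plan is to specialise the Fourier formula \eqref{GeneralFormula2} to the equal-slope case and to read off absolute continuity from the square-summability of the resulting coefficients. When $A_0 = A_1$, every factor $\frac{A_0 + A_1 e^{-2\pi i t/2^n}}{A_0 + A_1}$ collapses to $\frac{1 + e^{-2\pi i t/2^n}}{2} = e^{-\pi i t/2^n}\cos(\pi t/2^n)$, and the Vi\`ete-type identity $\prod_{m=1}^{\infty}\cos(\pi s/2^m) = \frac{\sin \pi s}{\pi s}$ yields the closed form $\prod_{m=1}^{\infty}\frac{1 + e^{-2\pi i s/2^m}}{2} = e^{-\pi i s}\,\frac{\sin \pi s}{\pi s}$, valid for all real $s$. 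I would substitute this into both the homogeneous product and the tail products appearing in \eqref{GeneralFormula2}.

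The key simplification is that, for integer $t$, almost every term vanishes. Writing $t = 2^a b$ with $b$ odd, the factor indexed by $j = a+1$ in any of these products equals $\frac{1 + e^{-\pi i b}}{2} = 0$. Hence the homogeneous term vanishes for every $t\neq 0$; and in the sum, where $\chi_{2^{n-1}\mathbb{Z}}$ already restricts to $n\leqslant a+1$, each tail product with $n\leqslant a$ still contains this zero factor. Only the single term $n = a+1$ survives, whose tail product is evaluated by the closed form at $s = t/2^{a+1} = b/2$ (so $|\sin(\pi b/2)| = 1$). After cancellation this collapses \eqref{GeneralFormula2} to, with $A = A_0 + A_1$,
$$
|\widehat{\mu}(t)| = \frac{|b_0 - b_1|}{\pi\,\sigma(\infty)}\cdot\frac{(2/A)^{a+1}}{|b|}, \qquad t = 2^a b,\ b\ \text{odd}.
$$

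To conclude I would invoke the standard criterion that a finite Borel measure on $\mathbb{T}$ whose Fourier coefficients lie in $\ell^2(\mathbb{Z})$ is absolutely continuous with an $L^2$ density: by Riesz--Fischer there is $g \in L^2(\mathbb{T})$ with $\widehat{g} = \widehat{\mu}$, and Theorem \ref{Convergence} (coefficients determine the measure) then forces $\mu = g\,d\lambda$. Square-summability follows from the unique factorisation of each nonzero integer as $t = 2^a b$ with $a \geqslant 0$ and $b$ odd:
$$
\sum_{t \neq 0}|\widehat{\mu}(t)|^2 = \left(\frac{|b_0 - b_1|}{\pi\,\sigma(\infty)}\right)^2\left(\sum_{a \geqslant 0}(2/A)^{2(a+1)}\right)\left(\sum_{b\ \text{odd}}\frac{1}{b^2}\right) = \left(\frac{|b_0 - b_1|}{\pi\,\sigma(\infty)}\right)^2\frac{4}{A^2 - 4}\cdot\frac{\pi^2}{4},
$$
which is finite exactly because $A = A_0 + A_1 > 2$ (here $A_0 = A_1 > 1$ forces $A \geqslant 4$, so $\sigma(\infty)$ is well defined and positive). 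Thus $\mu$ is absolutely continuous.

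The main obstacle is the bookkeeping that isolates the unique surviving index $n = a+1$ and the verification that its tail product has modulus precisely $2/(\pi|b|)$; once the closed form above is in hand, everything downstream is an elementary geometric-plus-Basel summation. Two points warrant care: the degenerate subcase $b_0 = b_1$, where $\widehat{\mu}(t) = 0$ for all $t \neq 0$ and so $\mu = \lambda$ (still absolutely continuous, and consistent with \textbf{2A}); and the fact that the corresponding $\ell^1$ sum diverges, since $\sum_{b\ \text{odd}}1/|b| = \infty$. One therefore genuinely needs the $\ell^2$ criterion rather than absolute convergence of the Fourier series, which is also why the density is only guaranteed to lie in $L^2$ and need not be continuous.
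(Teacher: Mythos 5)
Your proposal is correct and takes essentially the same approach as the paper: both isolate the single surviving term $n=a+1$ using the vanishing factor $\frac{1+e^{-\pi i b}}{2}=0$ for $t=2^{a}b$, reduce $|\widehat{\mu}(2^{a}b)|$ to a constant times $A^{-(a+1)}$ times a cosine product handled by Vi\`ete's identity, and conclude via Riesz--Fischer from square-summability of the Fourier coefficients. The only differences are cosmetic --- you evaluate the infinite product in closed form $\frac{2}{\pi |b|}$ before summing, whereas the paper carries the product into the double sum and applies Vi\`ete there, and your remark on the degenerate subcase $b_{0}=b_{1}$ (where $\mu=\lambda$) is a harmless addition consistent with the paper's formula.
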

\begin{proof}
Denote by $A$ the common value of $A_{0}$ and $A_{1}$. In this instance, it is easier to start with Equation \ref{eqn1}. Specialising to the current case gives

$$\widehat{\mu_{N}}(t)=\frac{\sigma(N-1)}{\sigma(N)} \widehat{\mu_{N-1}}(t) \frac{1+e^{\frac{-2 \pi i t}{2^{N}}}}{2}+\frac{1}{\sigma(N)}2^{N-1}\chi_{2^{N-1}\mathbb{Z}}(t)\frac{b_{0}+b_{1}e^{\frac{-2 \pi i t}{2^{N}}}}{(2A)^{N}} $$
Write $t=2^{a}b$, $b$ odd. At $N=a+1$, 
$$\frac{1+e^{\frac{-2 \pi i t}{2^{N}}}}{2}=\frac{1+e^{\frac{-2 \pi i 2^{a}b}{2^{a+1}}}}{2}=\frac{1+e^{-\pi i b}}{2}=0.$$ 
Hence at $N=a+1$ the $\widehat{\mu_{N-1}}$ term in the formula for $\widehat{\mu_{N}}$ vanishes, leaving only $\widehat{\mu_{a+1}}(2^{a}b)=\frac{1}{2\sigma(a+1)}\frac{b_{0}-b_{1}}{A^{a+1}}$. Moreover, for larger $N$, $2^{a}b \notin 2^{N-1}\mathbb{Z}$ and so the second term in the recursion is zero, and only the product term contributes, resulting in
$$\widehat{\mu_{a+n}}(2^{a}b)=\frac{b_{0}-b_{1}}{2\sigma(a+n)}\frac{1}{A^{a+1}}\prod_{j=1}^{n-1}\frac{1+e^{\frac{-\pi i b}{2^{j}}}}{2} \to \widehat{\mu}(2^{a}b)= \frac{b_{0}-b_{1}}{2\sigma(\infty)}\frac{1}{A^{a+1}}\prod_{j=1}^{\infty}\frac{1+e^{\frac{-\pi i b}{2^{j}}}}{2}.
$$

To show that $\mu$ is absolutely continuous, we use the Riesz--Fischer Theorem \cite[Theorem 4.17]{RudinAnalysis}, which states that if $\sum_{n=-\infty}^{\infty}\abs{c_{n}}^{2}<\infty$, then there is a $g$ in $L^{2}(\mathbb{T})\subset L^{1}(\mathbb{T})$ such that $c_{n}=\widehat{g}(n)$ for all $n \in \mathbb{Z}$. Hence, if the Fourier coefficients of $\mu$ are square summable, $\mu$ has the same Fourier coefficients as $g d \lambda$. The coefficients uniquely determine $\mu$ so $d\mu=gd\lambda$ and $\mu$ is absolutely continuous. Here, the square absolute values of $\mu$'s coefficients are, for $t =2^{a}b \neq 0$,
$$\abs{\widehat{\mu}(2^{a}b)}^{2}=\frac{1}{4\sigma(\infty)^{2}}\frac{(b_{0}-b_{1})^{2}}{A^{2a+2}}\prod_{j=1}^{\infty}\frac{\abs{1+e^{\frac{-\pi i b}{2^{j}}}}^{2}}{2^{2}}=\frac{1}{4\sigma(\infty)^{2}}\frac{(b_{0}-b_{1})^{2}}{A^{2a+2}}\prod_{j=2}^{\infty}\cos(\frac{\pi b}{2^{j}})^{2}.$$
In light of  $\abs{\widehat{\mu}(t)}^{2} = \abs{\widehat{\mu}(-t)}^{2}$, it suffices to prove $\sum_{t=1}^{\infty}\abs{\widehat{\mu}(t)}^{2}<\infty$. Each $t>0$ can be written uniquely in the form $t=2^{a}b$, so we can break up the sum to obtain
\begin{align*}
\sum_{n=1}^{\infty}\abs{\widehat{\mu}(t)}^{2}&=\frac{(b_{0}-b_{1})^{2}}{4\sigma(\infty)^{2}}\sum_{a=0}^{\infty}\sum_{c=0}^{\infty}\frac{1}{A^{2a+2}}\prod_{j=2}^{\infty}\cos(\frac{\pi (2c+1)}{2^{j}})^{2}\\
&=\frac{(b_{0}-b_{1})^{2}}{4\sigma(\infty)^{2}}\frac{1}{A^{2}-1} \left( \sum_{c=0}^{\infty} \prod_{j=1}^{\infty}\cos\left(\frac{\pi (c+\frac{1}{2})}{2^{j}}\right)^{2}\right).
\end{align*}
Using Viete's identity, $\frac{\sin(x)}{x}=\prod_{j=1}^{\infty}\cos(\frac{x}{2})$, we have
$$
\sum_{c=0}^{\infty} \prod_{j=1}^{\infty}\cos(\frac{\pi (c+\frac{1}{2})}{2^{j}})^{2}= \sum_{c=0}^{\infty} \frac{\sin(\pi (c+\frac{1}{2}))^{2}}{(\pi (c+\frac{1}{2}))^{2}}=\sum_{c=0}^{\infty} \frac{4}{\pi^{2}}\frac{1}{(2c+1)^{2}}=
\frac{1}{2}.
$$
Thus $\sum_{n=1}^{\infty}\abs{\widehat{\mu}(t)}^{2}<\infty$ and $\mu$ is absolutely continuous. This also shows $g=\frac{d \mu}{d \lambda} \in L^{2}(\mathbb{T})$ with $\norm{g}_{2}^{2}=1+\frac{(b_{0}-b_{1})^{2}}{4\sigma(\infty)^{2}}\frac{1}{A^{2}-1}
$.\end{proof}

\begin{prop}[\textbf{2C}]
If $A_{0}\neq A_{1}$, neither $0$ and $b_{0}+b_{1}\neq 0$, then $\mu$ is continuous.
\end{prop}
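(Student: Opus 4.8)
The plan is to argue at the level of measures, exploiting the convolution structure already exposed in the proof of Lemma~\ref{EXISTENCE} rather than returning to the Fourier side. Writing $P_j=\frac{A_0\delta_0+A_1\delta_{1/2^j}}{A_0+A_1}$, the recursion there reads $\mu_N=\frac{\sigma(N-1)}{\sigma(N)}\mu_{N-1}*P_N+\frac{1}{\sigma(N)}\bigl(\sum_{m=0}^{2^{N-1}-1}\delta_{m/2^{N-1}}\bigr)*\frac{b_0\delta_0+b_1\delta_{1/2^N}}{(A_0+A_1)^N}$. Unrolling it (with $\mu_0=\delta_0$) and letting $N\to\infty$, which is legitimate because $A_0+A_1\geqslant 3>2$ forces $\sigma(N)\to\sigma(\infty)<\infty$, expresses $\mu$ as a countable non-negative combination of measures,
\[
\mu=\frac{\sigma(0)}{\sigma(\infty)}\,\nu+\frac{1}{\sigma(\infty)}\sum_{n=1}^{\infty}\Bigl(\sum_{m=0}^{2^{n-1}-1}\delta_{m/2^{n-1}}\Bigr)*\frac{b_0\delta_0+b_1\delta_{1/2^n}}{(A_0+A_1)^n}*\nu_n,
\]
where $\nu=\Conv_{j=1}^{\infty}P_j$ is precisely the homogeneous measure of case~\textbf{1B} and $\nu_n=\Conv_{j=n+1}^{\infty}P_j$ is its rescaling by the factor $2^{-n}$. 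The summands have masses $2^{n-1}b/A^n$, which sum to $b/(A-2)$, so the series converges and $\mu$ has total mass $1$.

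The heart of the argument is that every summand is a \emph{continuous} measure. The leading measure $\nu$ is continuous by Proposition~\textbf{1B}, whose hypotheses ($A_0\neq A_1$, both nonzero) coincide with those of the present case; since rescaling by $2^{-n}$ is injective and sends atoms to atoms, each $\nu_n$ is continuous as well. I would then invoke the elementary fact that the convolution of \emph{any} finite measure $\rho$ with a continuous measure $\eta$ is continuous: for each $x$,
\[
(\rho*\eta)(\{x\})=\int_{\mathbb{T}}\eta(\{x-y\})\,d\rho(y)=0 .
\]
Consequently every term of the series, containing the continuous factor $\nu_n$, is continuous, and so is $\frac{\sigma(0)}{\sigma(\infty)}\nu$.

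To finish, I would use non-negativity: since all summands are non-negative measures whose masses form a convergent series, for every $x\in\mathbb{T}$ monotone convergence gives $\mu(\{x\})$ as the corresponding sum of the summands evaluated at $\{x\}$, each of which vanishes. Hence $\mu(\{x\})=0$ for all $x$, i.e.\ $\mu$ is continuous. The one point needing care — the only genuinely non-routine step — is justifying that the vague limit $\mu$ really equals the displayed infinite sum of measures and that evaluation on a singleton may be taken term by term; I expect to secure this exactly as in Lemma~\ref{EXISTENCE}, via truncation of the products at a fixed level together with absolute summability of the coefficients and monotone convergence on Borel sets. A Fourier alternative via Wiener's criterion (showing $\frac{1}{2T+1}\sum_{|t|\leqslant T}\abs{\widehat{\mu}(t)}^2\to 0$ by peeling off the \textbf{1B} part and estimating the inhomogeneous remainder) is available but longer and less transparent.
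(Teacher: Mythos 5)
Your proposal is correct, and it takes a genuinely different route from the paper. The paper never leaves the Fourier side: starting from the truncated general formula of Lemma \ref{EXISTENCE}, it factors $\widehat{\mu}(t)$ as $\widehat{\nu}(t)$ times a uniformly bounded bracket, where $\nu=\Conv_{j=1}^{\infty}P_{j}$ is exactly your case-\textbf{1B} measure, and then concludes that $\mu$ has no pure point part via Wiener's criterion, recycling the estimate $\frac{1}{2^{N}}\sum_{n=1}^{2^{N}}\abs{\widehat{\nu}(n)}^{2}\to 0$ from the proof of \textbf{1B}. You instead unroll the convolution recursion at the level of measures and kill atoms directly, using that convolving anything with a continuous measure gives a continuous measure; both arguments rest on \textbf{1B}, but on different parts of it (the paper on its Wiener average, you on its conclusion). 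Two steps of your sketch need tightening, and both have short fixes. First, the identification of $\mu$ with your infinite superposition: rather than re-running a truncation argument, note that your superposition has Fourier coefficients that agree term by term with Equation~\eqref{GeneralFormula2} (the Dirac comb transforms to $2^{n-1}\chi_{2^{n-1}\mathbb{Z}}(t)$, and $\widehat{\nu_{n}}(t)$ is the tail product over $j\geqslant n+1$), so Levy's uniqueness (Theorem \ref{Convergence}) gives equality as measures, after which evaluating the countable sum of non-negative measures on a singleton term by term is just countable additivity. Second, your rescaling claim for $\nu_{n}$ is delicate on $\mathbb{T}$, since division by $2^{n}$ is not a homomorphism of the torus; it is cleaner to use the tail factorisation $\nu=(P_{1}*\cdots*P_{n})*\nu_{n}$ (again by comparing Fourier coefficients), which yields $\nu(\{x\})\geqslant \left(\frac{A_{0}}{A_{0}+A_{1}}\right)^{n}\nu_{n}(\{x\})$ for every $x$, so continuity of $\nu$ forces continuity of every $\nu_{n}$. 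As for what each method buys: the paper's proof is shorter given the machinery already in place, while yours exposes more structure and in fact proves more than the proposition states. Each of your summands is a finite sum of translates of $\nu_{n}$, and $\nu_{n}$ is singular by the same tail factorisation together with the singularity half of \textbf{1B} (an absolutely continuous component of $\nu_{n}$ would convolve to a non-zero absolutely continuous component of $\nu$); since a countable sum of singular measures is singular, your decomposition delivers singular continuity of $\mu$ in one stroke --- a conclusion the paper only reaches in Section \ref{SHAPE} via Lemma \ref{LemmaName} and Borel's normal number theorem.
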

\begin{proof}
We start with the general formula in Equation \ref{eqn1}. For $t\neq 0$, write $t=2^{a}b$. As before, the sum truncates at $n=a+1$, resulting in
$$
\abs{\widehat{\mu}(2^{a}b)}=\frac{1}{\sigma(\infty)} \abs{\prod_{n=1}^{\infty} \frac{A_{0}+A_{1}e^{\frac{-2 \pi i 2^{a}b}{2^{n}}}}{A_{0}+A_{1}} \left( \sigma(0) +\sum_{n=1}^{a+1} 2^{n-1} \frac{b_{0}+b_{1}e^{\frac{-2\pi i 2^{a}b}{2^{n}}}}{\prod_{j=1}^{n}A_{0}+A_{1}e^{\frac{-2\pi i 2^{a}b}{2^{j}}}}  \right)}.
$$
Here, we recognise the product pulled out in front as the same coefficients appearing in case 1B; call this term $\widehat{\nu}$. The sum term in the brackets is uniformly bounded above by some constant $K$ for all values of $2^{a}b$. Thus $\abs{\widehat{\mu}(t)} \leqslant K \cdot \abs{\widehat{\nu}(t)}$ for all $t$. Hence, $\frac{1}{2^{N}}\sum_{n=1}^{2^{N}} \abs{\widehat{\mu}(t)}^{2} \leqslant K^{2}\frac{1}{2^{N}}\sum_{n=1}^{2^{N}}\abs{\widehat{\nu}(t)}^{2}$. This tends to $0$ as $N$ tends to infinity, proving that $\mu$ has no pure point part, by the proof of \textbf{1B}.
\end{proof}

\section{Finding the Shape of the Measures} \label{SHAPE}

In this section, we obtain more detailed information about the \textbf{2B}, \textbf{2C} and \textbf{2D} measures. For \textbf{2B} and \textbf{2D} we can explicitly describe the measure by determining its Radon--Nikodym derivative in the first case, and by specifying all of its pure point parts in the second (this also suffices to prove the measure is pure point). For \textbf{2C}, we complete the proof of singular continuity, and also give information about where it is concentrated. 

The distribution function of a measure $\mu$ on $\mathbb{T}= [0,1)$ is the function $F(x)=\mu([0,x])$. If $\mu$ has the decomposition $d \mu=g d\lambda +d\mu_{s}$ ($g d \lambda$ the absolutely continuous part, $d \mu_{s}$ the singular part), then it is a classical result \cite[Chapter 7]{RudinAnalysis} that $F^{\prime}(x)=g(x)$ for Lebesgue almost all $x$. Thus, one can calculate $g(x)$, using $F^{\prime}(x)=\lim_{h \to 0} \frac{F(x+h)-F(x)}{h} = \lim_{h \to 0}\frac{\mu((x,x+h])}{\lambda((x,x+h])}$ assuming $h>0$, with a similar expression if $h<0$. It is convenient to be able to generalise the notion of `differentiation' that one uses. The following definition and theorem can be found in \cite{RudinAnalysis}. They can easily be modified to apply to $\mathbb{T}$ rather than $\mathbb{R}^{k}$. The notation $B(x,r)$ denotes the open ball with centre $x$ and radius $r$.

\begin{dfn} \cite[Definition 7.9]{RudinAnalysis}
A sequence of sets $\{E_{i}(x)\}$ in $\mathbb{R}^{k}$ is nicely shrinking to $x\in \mathbb{R}^{k}$ if there exists an $\alpha>0$ and a sequence $r_{i}\to 0$ as $i\to \infty$ such that $E_{i}(x)\subset B(x,r_{i})$ and $\lambda(E_{i}(x))>\alpha \lambda(B(x,r_{i}))$ for all $i$, where $\lambda$ is Lebesgue measure. The $E_{i}(x)$ need not contain $x$, or obey any other conditions.
\end{dfn}

\begin{thm} \label{RudinTheorem}
\cite[Theorem 7.14]{RudinAnalysis}. Suppose $\mu$ is a finite Borel measure on $\mathbb{R}^{k}$ with Lebesgue decomposition $d\mu=g d\lambda + d \mu_{s}$. For each $x$ choose any sequence $\{E_{i}(x)\}$ which nicely shrinks to $x$. Then for $\lambda$-almost every $x$
$$
\lim_{i\to \infty}\frac{\mu(E_{i}(x))}{\lambda(E_{i}(x))}=g(x).
$$
\end{thm}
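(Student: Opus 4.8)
The plan is to first establish the result in the special case where each $E_{i}(x)$ is a ball $B(x,r_{i})$ centred at $x$ — that is, to show the \emph{symmetric derivative} $\lim_{r\to 0}\frac{\mu(B(x,r))}{\lambda(B(x,r))}=g(x)$ exists and equals $g(x)$ for $\lambda$-almost every $x$ — and then to use the nicely shrinking hypothesis to transfer this to arbitrary such sequences. The engine driving the ball case is the Hardy--Littlewood maximal function $M\mu(x)=\sup_{r>0}\frac{\mu(B(x,r))}{\lambda(B(x,r))}$, so my first task is to prove its weak type $(1,1)$ bound $\lambda(\{x:M\mu(x)>t\})\leqslant \frac{3^{k}}{t}\,\mu(\mathbb{R}^{k})$. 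I would obtain this via a Vitali-type covering argument: given a compact subset $K$ of the superlevel set, cover it by balls on which $\mu(B)>t\,\lambda(B)$, extract a finite disjoint subfamily whose threefold dilations still cover $K$, and sum over that subfamily.

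Next I would split $\mu$ according to its Lebesgue decomposition and treat $g\,d\lambda$ and $\mu_{s}$ separately. For the absolutely continuous part the goal is $\lim_{r\to 0}\frac{1}{\lambda(B(x,r))}\int_{B(x,r)}|g-g(x)|\,d\lambda=0$ almost everywhere; I would prove this by approximating $g$ in $L^{1}$ by a continuous function $h$, for which the limit vanishes trivially by continuity, and controlling the error $g-h$ by applying the maximal inequality to $|g-h|\,d\lambda$ together with Chebyshev's inequality, letting $\|g-h\|_{1}\to 0$ to force the exceptional set to have measure zero. For the singular part I would use that $\mu_{s}\perp\lambda$ fixes a Borel set $A$ with $\lambda(A)=0$ on which $\mu_{s}$ is concentrated; outer regularity then traps $A$ inside open sets of arbitrarily small $\mu_{s}$-measure, and the maximal inequality again forces $\lim_{r\to 0}\frac{\mu_{s}(B(x,r))}{\lambda(B(x,r))}=0$ for $\lambda$-almost every $x$.

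Combining the two pieces yields the ball version. The final step is the extension to a general nicely shrinking sequence: since $E_{i}(x)\subset B(x,r_{i})$ and $\lambda(E_{i}(x))>\alpha\,\lambda(B(x,r_{i}))$, I would estimate
$$\frac{|\mu(E_{i}(x))-g(x)\lambda(E_{i}(x))|}{\lambda(E_{i}(x))}\leqslant \frac{1}{\alpha}\cdot\frac{\int_{B(x,r_{i})}|g-g(x)|\,d\lambda+\mu_{s}(B(x,r_{i}))}{\lambda(B(x,r_{i}))},$$
whose right-hand side tends to $0$ exactly on the full-measure set of $x$ handled above. The constant $\alpha$ from the nicely shrinking definition is precisely what makes this comparison work, since it bounds $\lambda(B(x,r_{i}))/\lambda(E_{i}(x))$ uniformly in $i$.

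I expect the main obstacle to be the maximal-inequality step — both the covering lemma itself and the care needed to make the singular-part argument rigorous — since every subsequent estimate rests entirely on this weak $(1,1)$ bound. Once the maximal theorem is established, the absolutely continuous estimate is a routine density-of-continuous-functions approximation, the singular estimate is a routine outer-regularity manipulation, and the passage from balls to nicely shrinking sets is purely formal.
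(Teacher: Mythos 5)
The paper offers no proof of this statement at all --- it is quoted directly from Rudin \cite[Theorem 7.14]{RudinAnalysis} --- so the only meaningful comparison is with the proof in the cited source, and your plan is precisely that proof: the weak type $(1,1)$ maximal inequality via the Vitali $3^{k}$ covering trick, Lebesgue points of $g$ via approximation by continuous functions, vanishing symmetric derivative of the singular part via outer regularity, and the formal $1/\alpha$ transfer from balls to nicely shrinking sets. The overall structure is sound and your final displayed estimate is exactly the right one.

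One step is misstated, however, and as written it is impossible. You say outer regularity ``traps $A$ inside open sets of arbitrarily small $\mu_{s}$-measure,'' where $A$ is the set carrying $\mu_{s}$. Every open set containing $A$ has $\mu_{s}$-measure equal to the full mass $\mu_{s}(\mathbb{R}^{k})$, so no such trapping exists. What the argument needs is the complementary set: the bad set $E_{t}=\{x\in A^{c}:\limsup_{r\to 0}\mu_{s}(B(x,r))/\lambda(B(x,r))>t\}$ lies in $A^{c}$, which satisfies $\mu_{s}(A^{c})=0$; outer regularity of $\mu_{s}$ then gives an open $V\supset A^{c}\supset E_{t}$ with $\mu_{s}(V)<\varepsilon$, and the same Vitali covering argument, applied only to balls contained in $V$ on which $\mu_{s}(B)>t\,\lambda(B)$, yields $\lambda(E_{t})\leqslant 3^{k}\varepsilon/t$. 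Letting $\varepsilon\to 0$ and taking the union over $t=1/n$ finishes the singular part. (Note also that since $\lambda(A)=0$, points of $A$ are already negligible, which is why restricting attention to $A^{c}$ is harmless.) With that correction --- $A^{c}$ in place of $A$ --- your outline is a complete and correct reconstruction of the cited proof.
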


Let $x=(0.x_{1}x_{2}\cdots)_{2}$ be the binary representation of $x \in [0,1)$. We always choose the representation with infinitely many trailing $0$'s if $x$ is a dyadic rational. We take $E_{i}(x)$ to be the half open interval $[(0.x_{1}\cdots x_{i}00\cdots)_{2},(0.x_{1}\cdots x_{i}11\cdots)_{2})$. 
Then $E_{i}(x)$ has length $\frac{1}{2^{i}}$, and so $E_{i}(x)\subset B(x,\frac{1}{2^{i}})$ and $\lambda(E_{i}(x))\geqslant \frac{1}{2}\lambda( B(x,\frac{1}{2^{i}}))$. Hence the $E_{i}(x)$ are nicely shrinking to $x$. Our work will require the Portmanteau Theorem, stated below as it applies to our situation.

\begin{thm} \label{Portmanteau}
Let $\mu$, $\{\mu_{n}\}$ be Borel probability measures on $\mathbb{T}$. The following are equivalent.
\begin{enumerate}[(i)]
    \item $\mu_{n}$ converges vaguely to $\mu$.
    \item $\limsup_{n\to \infty} \mu_{n}(F)\leqslant \mu (F)$ for all closed sets $F$. \label{Second}
    \item $\liminf_{n\to \infty} \mu_{n}(G)\geqslant \mu (G)$ for all open sets $G$.
    \item $\lim_{n\to \infty}\mu_{n}(A)=\mu(A)$ for all sets $A$ such that $\mu(\partial A)=0$. \label{Fourth}
\end{enumerate}
\end{thm}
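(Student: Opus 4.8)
The plan is to prove the four conditions equivalent by closing the cycle of implications
$$
(i) \Rightarrow (ii) \Leftrightarrow (iii) \Rightarrow (iv) \Rightarrow (i),
$$
using throughout that $\mathbb{T}$ is a compact metric space (so every continuous function is bounded and vague and weak convergence coincide) and that all the measures are probabilities, so their total mass is always $1$. The equivalence $(ii) \Leftrightarrow (iii)$ is immediate by complementation: if $F$ is closed then $G=\mathbb{T}\setminus F$ is open, and since each measure has total mass $1$ we have $\mu_{n}(G)=1-\mu_{n}(F)$ and $\mu(G)=1-\mu(F)$, whence $\liminf_{n}\mu_{n}(G)=1-\limsup_{n}\mu_{n}(F)$, making the two inequalities literally the same statement.

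For $(i)\Rightarrow(ii)$ I would approximate the indicator of a closed set from above by continuous functions. Given closed $F$, set $\phi_{\varepsilon}(x)=\max\!\big(0,\,1-\varepsilon^{-1}\operatorname{dist}(x,F)\big)$ with respect to the toroidal metric; each $\phi_{\varepsilon}$ is continuous, equals $1$ on $F$, and decreases pointwise to $\chi_{F}$ as $\varepsilon\to 0$ precisely because $F$ is closed. Since $\chi_{F}\leqslant\phi_{\varepsilon}$, vague convergence gives $\limsup_{n}\mu_{n}(F)\leqslant\limsup_{n}\int\phi_{\varepsilon}\,d\mu_{n}=\int\phi_{\varepsilon}\,d\mu$, and letting $\varepsilon\to 0$ by dominated convergence yields $\limsup_{n}\mu_{n}(F)\leqslant\mu(F)$. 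Next, for $(ii)\&(iii)\Rightarrow(iv)$, take a set $A$ with $\mu(\partial A)=0$ and sandwich it as $A^{\circ}\subseteq A\subseteq\overline{A}$. Applying $(iii)$ to the open set $A^{\circ}$ and $(ii)$ to the closed set $\overline{A}$, and using $\mu(A^{\circ})=\mu(\overline{A})=\mu(A)$ (which follows from $\mu(\partial A)=0$), all the intervening inequalities collapse and force $\lim_{n}\mu_{n}(A)=\mu(A)$.

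The step I expect to be the crux is $(iv)\Rightarrow(i)$, recovering vague convergence from convergence on continuity sets. Given a continuous $f$, it is bounded because $\mathbb{T}$ is compact, so after adding a constant I may assume $0\leqslant f\leqslant M$. I would invoke the layer-cake representation $\int f\,d\mu=\int_{0}^{M}\mu(\{f>t\})\,dt$, and likewise for each $\mu_{n}$. The key observation is that the level sets $\{f=t\}$ can carry positive $\mu$-mass for at most countably many $t$ (the mass is finite), and since $\partial\{f>t\}\subseteq\{f=t\}$, for all but countably many $t$ the superlevel set $\{f>t\}$ is a $\mu$-continuity set. Condition $(iv)$ then gives $\mu_{n}(\{f>t\})\to\mu(\{f>t\})$ for almost every $t\in[0,M]$, and since the integrands are bounded by $1$ on the finite interval $[0,M]$, the bounded convergence theorem transfers this to $\int f\,d\mu_{n}\to\int f\,d\mu$, which is exactly vague convergence.

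The main obstacle is thus concentrated in this last implication: correctly identifying which superlevel sets qualify as $\mu$-continuity sets (via the countability of atomic levels) and justifying the interchange of limit and integral through the layer-cake formula. The remaining implications are routine once the compactness of $\mathbb{T}$ and the probability normalisation are exploited, and the complementation identity makes $(ii)\Leftrightarrow(iii)$ essentially free.
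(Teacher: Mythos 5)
Your proof is correct and complete: the cycle $(i)\Rightarrow(ii)\Leftrightarrow(iii)\Rightarrow(iv)\Rightarrow(i)$ is closed, the Lipschitz approximation $\phi_{\varepsilon}$ of $\chi_{F}$, the sandwich $A^{\circ}\subseteq A\subseteq\overline{A}$, and the layer-cake argument with the countability of atomic levels $\{f=t\}$ are all handled properly, and each step correctly exploits compactness of $\mathbb{T}$ and the probability normalisation. Note, however, that the paper itself states this Portmanteau Theorem without proof, as a standard result imported from the literature, so there is no in-paper argument to compare against; what you have written is the classical textbook proof that the paper implicitly relies on.
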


We are almost ready to give an explicit formula for the Radon--Nikodym derivative of the ghost measure of a sequence in Case \textbf{2B} of Theorem \ref{THEOREM}. We require which also applies to \textbf{2C}.

\begin{lem} \label{MainResult}
Assume $A_{0},A_{1}>0$ and $A_{0}+A_{1}\geqslant 3$. Let 
$x=(0.x_{1}x_{2}\ldots)_{2}$. Then
\begin{align*}
    \mu(E_{i}(x))=\frac{1}{(A_{0}+A_{1})^{i}}\frac{1}{\sigma(\infty)}\left(A_{x_{1}}\cdots A_{x_{i}}\left(f(1)+\sum_{j=1}^{i}\frac{b_{x_{j}}}{\prod_{k=1}^{j}A_{x_{k}}}\right)+\frac{(b_{0}+b_{1})}{A_{0}+A_{1}-2}\right).
\end{align*}
\end{lem}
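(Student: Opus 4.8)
The plan is to compute $\mu(E_i(x))$ by exploiting that $E_i(x)$ is a dyadic interval and that $\mu$ is already known to be continuous, so that the vague convergence $\mu_N\to\mu$ can be upgraded to genuine convergence of the measures of the fixed set $E_i(x)$. Concretely, the endpoints of $E_i(x)$ are the dyadic rationals $(0.x_1\cdots x_i)_2$ and $(0.x_1\cdots x_i)_2+2^{-i}$, so $\partial E_i(x)$ is a two-point set. Under the hypotheses $A_0,A_1>0$ (with $A_0+A_1\geq3$), the earlier propositions \textbf{1A}, \textbf{1B}, \textbf{2B}, \textbf{2C} show $\mu$ has no pure point part, whence $\mu(\partial E_i(x))=0$. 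Part \eqref{Fourth} of the Portmanteau Theorem (Theorem \ref{Portmanteau}) then gives $\mu(E_i(x))=\lim_{N\to\infty}\mu_N(E_i(x))$, reducing the problem to a finite computation at each level $N$ followed by a limit.

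Next I would evaluate $\mu_N(E_i(x))$ for $N\geq i$. The atoms of $\mu_N$ sit at $n/2^N$ for $0\leq n<2^N$, and $n/2^N=(0.n_1\cdots n_N)_2$ lies in $E_i(x)$ exactly when $n_l=x_l$ for $1\leq l\leq i$, the half-open convention handling the boundary cleanly. Writing $m=2^N+n$, these are precisely the integers $m=2^{N-i}p+q$ with $p=(1x_1\cdots x_i)_2$ and $q\in\{0,\ldots,2^{N-i}-1\}$, so that $\mu_N(E_i(x))=\Sigma(N)^{-1}S_{N-i}$ where $S_j:=\sum_{q=0}^{2^j-1}f(2^j p+q)$. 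Splitting $q=2q'+\epsilon$ and applying \eqref{eqn1} gives $S_j=(A_0+A_1)S_{j-1}+(b_0+b_1)2^{j-1}=A\,S_{j-1}+b\,2^{j-1}$ with $S_0=f(p)$ — the very recurrence satisfied by $\Sigma(N)$, but with initial value $f(p)$ in place of $f(1)$. Solving it exactly as in the opening lemma yields $S_{N-i}=A^{N-i}f(p)+b\frac{A^{N-i}-2^{N-i}}{A-2}$. Dividing by $\Sigma(N)=A^N\sigma(N)$, cancelling $A^{N-i}$, and letting $N\to\infty$ (so $(2/A)^{N-i}\to0$ since $A\geq3$, and $\sigma(N)\to\sigma(\infty)$) produces the compact form $\mu(E_i(x))=A^{-i}\sigma(\infty)^{-1}\big(f(p)+\tfrac{b}{A-2}\big)$.

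It then remains to unfold $f(p)$ into the stated product-and-sum. Building $p=(1x_1\cdots x_i)_2$ digit by digit from $f(1)$ and repeatedly applying \eqref{eqn1} gives $f(p)=\big(\prod_{k=1}^{i}A_{x_k}\big)f(1)+\sum_{j=1}^{i}b_{x_j}\prod_{k=j+1}^{i}A_{x_k}$. Factoring $\prod_{k=1}^{i}A_{x_k}$ out of the sum turns $\sum_{j}b_{x_j}\prod_{k=j+1}^{i}A_{x_k}$ into $\big(\prod_{k=1}^{i}A_{x_k}\big)\sum_{j=1}^{i}b_{x_j}\big/\prod_{k=1}^{j}A_{x_k}$, which is exactly the bracketed expression in the statement; substituting back gives the claimed formula, with $b=b_0+b_1$ and $A-2=A_0+A_1-2$.

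The computation is essentially routine once set up, so I expect the obstacles to be organisational rather than deep. The first is correctly justifying the passage $\mu(E_i(x))=\lim_N\mu_N(E_i(x))$: vague convergence alone does not control the measure of a fixed Borel set, and it is the continuity of $\mu$ (hence $\mu(\partial E_i(x))=0$) together with Portmanteau that legitimises it. The second is the binary-digit bookkeeping — keeping the prefix $p=(1x_1\cdots x_i)_2$, the free suffix $q$, and the indices in the two nested products aligned — since an off-by-one in the digit indexing would scramble which $A_{x_k}$ and $b_{x_j}$ appear. Everything else simply reuses the solution of the linear recurrence $\Sigma(N)=A\Sigma(N-1)+b2^{N-1}$ already established in the opening lemma.
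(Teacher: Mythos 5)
Your proposal is correct and takes essentially the same route as the paper: both identify the atoms of $\mu_{N}$ lying in $E_{i}(x)$ as those indexed by integers with binary prefix $(1x_{1}\cdots x_{i})_{2}$, evaluate the resulting block sum via the recurrence $S_{j}=(A_{0}+A_{1})S_{j-1}+(b_{0}+b_{1})2^{j-1}$ (which the paper computes equivalently by expanding $f((1x_{1}\cdots x_{i}r_{1}\cdots r_{c})_{2})$ and summing over the suffix $r$), pass to the limit using Theorem \ref{Portmanteau} \eqref{Fourth} together with the continuity of $\mu$, and finish by unfolding $f((1x_{1}\cdots x_{i})_{2})$ into the stated product-and-sum form. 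The only difference is organisational: you reuse the solved recurrence from the opening lemma with initial value $f(p)$, where the paper carries out the expansion directly.
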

\begin{proof}
Assume $N>i$ and put $N=i+c$. Then $\frac{n}{2^{N}}\in E_{i}(x)$ if and only if $n=x_{1}\cdots x_{i}0^{c} +r$ for some $0 \leqslant r \leqslant 2^{c}-1$. Let $r=(r_{1}\ldots r_{c})_{2}$. Then
$$
f((1x_{1}\cdots x_{i}r_{1}\cdots r_{c})_{2})= A_{r_{c}}\cdots A_{r_{1}}f((1x_{1}\cdots x_{i})_{2})+\sum_{j=1}^{c}\left(\prod_{l=j+1}^{c}A_{r_{l}}\right)b_{r_{j}}.
$$
With this, the value of $\mu_{N}(E_{i}(x)) = \frac{1}{\Sigma(N)}\sum_{\frac{n}{2^{N}}\in E_{i}(x)}f(2^{N}+n)$ satisfies 

\begin{align*}
\mu_{i+c}(E_{i}(x))&=\frac{1}{\Sigma(i+c)}\sum_{r=0}^{2^{c}-1}A_{r_{c}}\cdots A_{r_{1}}f((1x_{1}\cdots x_{i})_{2})+\sum_{j=1}^{c}\left(\prod_{l=j+1}^{c}A_{r_{l}}\right)b_{r_{j}}\\
&=\frac{1}{\Sigma(i+c)}\left(f((1x_{1}\cdots x_{i})_{2})(A_{0}+A_{1})^{c}+\sum_{r=0}^{2^{c}-1}\sum_{j=1}^{c}\left(\prod_{l=j+1}^{c}A_{r_{l}}\right)b_{r_{j}}\right)\\
&=\frac{1}{(A_{0}+A_{1})^{i}}\frac{1}{\sigma(i+c)}\left(f((1x_{1}\cdots x_{i})_{2})+\frac{(b_{0}+b_{1})\left(1-\frac{2}{A_{0}+A_{1}}^{c}\right)}{A_{0}+A_{1}-2}\right).
\end{align*}
Next take $c\to \infty$ and apply Theorem \ref{Portmanteau} \eqref{Fourth}, since $\mu$ is continuous, to obtain

$$
\mu(E_{i}(x))=\frac{1}{(A_{0}+A_{1})^{i}}\frac{1}{\sigma(\infty)}\left(f((1x_{1}\cdots x_{i})_{2})+\frac{(b_{0}+b_{1})}{A_{0}+A_{1}-2}\right).
$$
Now use $f((1x_{1}\cdots x_{i})_{2})=A_{x_{1}}\cdots A_{x_{i}}\left(f(1)+\sum_{j=1}^{i}\frac{b_{x_{j}}}{\prod_{k=1}^{j}A_{x_{k}}}\right)$.
\end{proof}

\begin{thm}
In \textbf{2B}, $A_{0}=A_{1}=A>1$, $\mu$ has Radon--Nikodym derivative
$$
g(x)=\frac{f(1)+\sum_{j=1}^{\infty}\frac{b_{x_{j}}}{A^{j}}}{f(1)+\frac{b_{0}+b_{1}}{2A-2}},
$$
where $x=(0.x_{1}x_{2}\cdots)_{2}$ is the binary representation of $x$.
\end{thm}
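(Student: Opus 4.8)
The plan is to read off the result directly from Lemma \ref{MainResult} combined with the differentiation Theorem \ref{RudinTheorem}, specialised to the case $A_{0}=A_{1}=A$. First I would check the hypotheses: since $A>1$ forces $A\geqslant 2$, we have $A_{0}+A_{1}=2A\geqslant 4\geqslant 3$, so Lemma \ref{MainResult} applies. Substituting $A_{0}=A_{1}=A$ collapses all the products, because every factor $A_{x_{k}}$ equals $A$: thus $A_{x_{1}}\cdots A_{x_{i}}=A^{i}$, $\prod_{k=1}^{j}A_{x_{k}}=A^{j}$, $(A_{0}+A_{1})^{i}=(2A)^{i}$ and $A_{0}+A_{1}-2=2A-2$. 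The lemma then reads
$$
\mu(E_{i}(x))=\frac{1}{(2A)^{i}\,\sigma(\infty)}\left(A^{i}\Big(f(1)+\sum_{j=1}^{i}\frac{b_{x_{j}}}{A^{j}}\Big)+\frac{b_{0}+b_{1}}{2A-2}\right).
$$

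Next I would form the difference quotient. The intervals $E_{i}(x)$ were already shown (right before Theorem \ref{Portmanteau}) to be nicely shrinking to $x$, with $\lambda(E_{i}(x))=2^{-i}$. Dividing the display above by $2^{-i}$ and using $2^{i}/(2A)^{i}=A^{-i}$ gives
$$
\frac{\mu(E_{i}(x))}{\lambda(E_{i}(x))}=\frac{1}{\sigma(\infty)}\left(f(1)+\sum_{j=1}^{i}\frac{b_{x_{j}}}{A^{j}}+\frac{1}{A^{i}}\frac{b_{0}+b_{1}}{2A-2}\right).
$$
Letting $i\to\infty$, the final term vanishes since $A>1$, and the partial sums converge because the $b_{x_{j}}$ are bounded while $A>1$, so the limit is $\sigma(\infty)^{-1}\big(f(1)+\sum_{j=1}^{\infty}b_{x_{j}}/A^{j}\big)$. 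By Theorem \ref{RudinTheorem} this limit equals the Radon--Nikodym derivative $g(x)$ for $\lambda$-almost every $x$; because Proposition \textbf{2B} already establishes that $\mu$ is absolutely continuous, this $g$ is the full density and not merely the density of an absolutely continuous part. Finally I would identify $\sigma(\infty)$: with $A_{0}+A_{1}=2A>2$, the preliminary formula for $\sigma(\infty)$ gives $\sigma(\infty)=f(1)+\frac{b_{0}+b_{1}}{2A-2}$, which is exactly the denominator in the asserted expression for $g$, completing the derivation.

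The steps here are almost entirely mechanical once Lemma \ref{MainResult} and Theorem \ref{RudinTheorem} are available, so I do not expect a genuine obstacle; the only things to watch are bookkeeping points. The first is the notational collision between the common value $A$ and the sum $A_{0}+A_{1}=2A$ that appears both in the $(A_{0}+A_{1})^{i}$ prefactor and inside $\sigma(\infty)$ -- it is essential to track which $A$ is meant at each occurrence. The second is making sure the a.e.\ statement of Theorem \ref{RudinTheorem} is interpreted correctly at dyadic rationals (a $\lambda$-null set), where the binary expansion, and hence the digit sequence $(x_{j})$, is ambiguous; since we only claim equality almost everywhere this causes no difficulty. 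Both are routine, so the ``hard part'' is really just the clean invocation of the differentiation theorem on the nicely shrinking dyadic intervals.
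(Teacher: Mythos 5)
Your proposal is correct and follows exactly the paper's own argument: substitute $A_{0}=A_{1}=A$ into Lemma \ref{MainResult}, divide by $\lambda(E_{i}(x))=2^{-i}$, let $i\to\infty$, and invoke Theorem \ref{RudinTheorem} together with $\sigma(\infty)=f(1)+\frac{b_{0}+b_{1}}{2A-2}$. Your added bookkeeping (verifying $A_{0}+A_{1}\geqslant 3$, noting absolute continuity from Proposition \textbf{2B} so that $g$ is the full density, and handling the dyadic-rational null set) only makes explicit what the paper leaves implicit.
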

\begin{proof}

Take Lemma \ref{MainResult}, insert $A=A_{0}=A_{1}$ and $\lambda(E_{i}(x))=\frac{1}{2^{i}}$ to get $\frac{\mu(E_{i}(x))}{\lambda(E_{i}(x))} = \frac{1}{\sigma(\infty)} ( f(1) + \sum_{j=1}^{i}  \frac{b_{x_{j}}}{A^{j}} + \mathcal{O}(\frac{1}{A^{i}}) )$. Taking $i \to \infty$, using the value of $\sigma(\infty)$ and Theorem \ref{RudinTheorem} give the result. 
\end{proof}

We now show that any Case \textbf{2C} measure is singular continuous, by proving that the Radon--Nikodym derivative of its absolutely continuous part is zero. This also provides information about where $\mu$ is concentrated.

\begin{lem} \label{LemmaName}
If $0<A_{0} < A_{1}$, then $\lim_{i\to \infty}\mu(E_{i}(x)) / \lambda(E_{i}(x))=0$ if the lower density of $0$'s in $x$'s binary representation $x=(0.x_{1}x_{2}\cdots)_{2}$ is greater than $\Lambda := \log(\frac{2A_{1}}{A_{0}+A_{1}}) / \log(\frac{A_{1}}{A_{0}})$. The result for $A_{1}<A_{0}$ is the same with $0$ and $1$ swapped.
\end{lem}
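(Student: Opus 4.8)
The plan is to start from the explicit formula for $\mu(E_i(x))$ given in Lemma \ref{MainResult}, divide by $\lambda(E_i(x)) = 2^{-i}$, and show the resulting ratio decays exponentially. Write $A = A_0 + A_1$ and $b = b_0 + b_1$, and let $n_0 = n_0(i)$ denote the number of zeros among $x_1,\ldots,x_i$ and $n_1 = i - n_0$ the number of ones, so that $A_{x_1}\cdots A_{x_i} = A_0^{n_0}A_1^{n_1}$. Multiplying the formula of Lemma \ref{MainResult} by $2^i$ gives
\[
\frac{\mu(E_i(x))}{\lambda(E_i(x))} = \frac{1}{\sigma(\infty)}\left[\left(\frac{2}{A}\right)^i A_0^{n_0}A_1^{n_1}\left(f(1) + \sum_{j=1}^i \frac{b_{x_j}}{\prod_{k=1}^j A_{x_k}}\right) + \left(\frac{2}{A}\right)^i\frac{b}{A-2}\right].
\]
The second bracketed term tends to $0$ because $A = A_0 + A_1 \geq 3 > 2$, so $(2/A)^i \to 0$. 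For the first term I would first control the correction factor: since $A_0, A_1 \geq 1$, every partial product satisfies $\prod_{k=1}^j A_{x_k} \geq 1$, so each summand is at most $b_{x_j} \leq b$ and the whole parenthesis is bounded above by $f(1) + bi$, growing only linearly in $i$.

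Next I would isolate the exponential part. Setting $\rho_i = n_0/i$ for the density of zeros in the length-$i$ prefix, the remaining factor rearranges as
\[
\left(\frac{2}{A}\right)^i A_0^{n_0}A_1^{n_1} = \left(\frac{2A_1}{A}\right)^i\left(\frac{A_0}{A_1}\right)^{n_0} = \left[\frac{2A_1}{A}\left(\frac{A_0}{A_1}\right)^{\rho_i}\right]^i.
\]
The exponent $\Lambda$ is defined precisely so that the base here equals $1$ when $\rho_i = \Lambda$; since $A_0/A_1 < 1$ the base is strictly decreasing in $\rho_i$, and it drops below $1$ exactly when $\rho_i > \Lambda$. (Note $2A_1 > A$ and $2A_0 < A$ because $A_0 < A_1$, so $\Lambda \in (0,1)$ is a genuine density threshold.)

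The hypothesis that the lower density of zeros exceeds $\Lambda$ means $\liminf_i \rho_i > \Lambda$, so I can fix $\eta$ with $\Lambda < \eta < \liminf_i \rho_i$; then $\rho_i > \eta$ for all sufficiently large $i$, yielding the uniform bound $\frac{2A_1}{A}(A_0/A_1)^{\rho_i} \leq \frac{2A_1}{A}(A_0/A_1)^{\eta} =: r < 1$. Combining the pieces, for large $i$ the first term is at most $\frac{1}{\sigma(\infty)}\, r^i(f(1) + bi)$, which tends to $0$ since the geometric factor $r^i$ dominates the linear factor. Together with the vanishing of the second term, this gives $\lim_{i\to\infty}\mu(E_i(x))/\lambda(E_i(x)) = 0$, and the case $A_1 < A_0$ follows by the symmetric argument interchanging the roles of $0$ and $1$. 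The only genuinely delicate point is the treatment of the hypothesis: the density $\rho_i$ need not converge, so rather than passing to a limit I must work with the liminf and extract a uniform sub-unit bound $r$ on the base valid for all large $i$; everything else is routine estimation.
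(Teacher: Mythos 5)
Your proposal is correct and follows essentially the same route as the paper: both start from the formula of Lemma \ref{MainResult}, divide by $\lambda(E_i(x))=2^{-i}$, bound the correction sum linearly in $i$, and observe that the factor $\left(\frac{2}{A_0+A_1}\right)^i A_0^{\#0_i}A_1^{\#1_i}$ decays exponentially precisely when the prefix density of zeros exceeds $\Lambda$, handling the lower-density hypothesis by a uniform bound valid for all large $i$. The only difference is presentational: the paper takes logarithms and shows the resulting expression diverges to $-\infty$, whereas you keep the estimate multiplicative and dominate it by $r^i(f(1)+bi)$ with $r<1$.
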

\begin{proof}

Let $\overline{A}=\frac{A_{0}+A_{1}}{2}$, and $\#0_{i}$ be the number of $0$'s in $\{x_{1},\ldots,x_{i}\}$ and similarly for $\#1_{i}$. Let $\Lambda_{i}=\frac{\#0_{i}}{i}$ be the fraction of zeroes in that set. In Lemma \ref{MainResult}, the sum term is at most $\mathcal{O}(i)$ if either $A_{i}$ is $1$, and is $\mathcal{O}(1)$ otherwise, so upon taking logarithms one arrives at

\begin{align*}
\log \left( \frac{\mu(E_{i}(x))}{\lambda(E_{i}(x))} \right)&=\#0_{i} \cdot \log \left(\frac{A_{0}}{\overline{A}}\right)+\#1_{i} \cdot \log \left(\frac{A_{1}}{\overline{A}}\right)+\mathcal{O}(\log(i))\\
&=i \cdot \left(\Lambda_{i} \cdot \log \left(\frac{A_{0}}{\overline{A}}\right)+(1-\Lambda_{i}) \cdot \log \left(\frac{A_{1}}{\overline{A}}\right)\right)+\mathcal{O}(\log(i)).
\end{align*}
If one takes $\Lambda$ as in the statement of the lemma, uses $A_{0}<A_{1}$ then the quantity in brackets is negative if $\Lambda_{i}>\Lambda$. Thus, the expression diverges to negative infinity if $\Lambda_{i}>\Lambda+\varepsilon$, for some $\varepsilon>0$ and all sufficiently large $i$. Thus $\lim_{i \to \infty}\frac{\mu(E_{i}(x))}{\lambda(E_{i}(x))}=0$, if the lower density of $0$'s in $x$'s binary expansion is greater than $\Lambda$.  \end{proof}

\begin{thm} 
In \textbf{2C}, the ghost measure is singular continuous.
\end{thm}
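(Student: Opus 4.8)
The plan is to show that the absolutely continuous part of $\mu$ vanishes by proving that its Radon--Nikodym derivative is zero $\lambda$-almost everywhere, and then to combine this with the continuity already established in \textbf{2C}. By Theorem \ref{RudinTheorem}, applied with the nicely shrinking sets $E_{i}(x)$ introduced above, the limit $g(x)=\lim_{i\to\infty}\mu(E_{i}(x))/\lambda(E_{i}(x))$ is, for $\lambda$-almost every $x$, exactly the density of the absolutely continuous part in the Lebesgue decomposition $d\mu=g\,d\lambda+d\mu_{s}$. Lemma \ref{LemmaName} supplies a sufficient condition for this limit to be $0$: assuming without loss of generality $A_{0}<A_{1}$, it suffices that the lower density of $0$'s in the binary expansion of $x$ exceed the threshold $\Lambda=\log(2A_{1}/(A_{0}+A_{1}))/\log(A_{1}/A_{0})$. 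So the entire proof reduces to showing that $\lambda$-almost every $x$ satisfies this digit condition.

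First I would invoke Borel's normal number theorem: for $\lambda$-almost every $x\in[0,1)$ the asymptotic frequency of $0$'s in the binary expansion equals $1/2$, so in particular the lower density of $0$'s equals $1/2$ for such $x$. It therefore suffices to verify the strict inequality $\Lambda<1/2$. Writing $t=A_{1}/A_{0}>1$, this is equivalent to $2t/(1+t)<\sqrt{t}$, hence to $2\sqrt{t}<1+t$, hence to $(\sqrt{t}-1)^{2}>0$, which holds precisely because $A_{0}\neq A_{1}$. Thus for $\lambda$-almost every $x$ the lower density of $0$'s is $1/2>\Lambda$, and Lemma \ref{LemmaName} gives $g(x)=0$ almost everywhere, so $\mu_{ac}=0$.

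Finally I would assemble the pieces. Since \textbf{2C} already shows $\mu$ has no pure point part, $\mu_{pp}=0$, and we have just shown $\mu_{ac}=0$; as $\mu$ is a probability measure, its Lebesgue decomposition forces $\mu=\mu_{sc}\neq0$, so $\mu$ is singular continuous. For the concentration statement, the same inequality shows that the mass lives on the set of $x$ whose lower density of $0$'s is at most $\Lambda<1/2$, a Lebesgue-null set.

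The main obstacle is really the single inequality $\Lambda<1/2$; everything else is a clean assembly of Lemma \ref{LemmaName}, Theorem \ref{RudinTheorem}, and the normal number theorem. This inequality is exactly where the hypothesis $A_{0}\neq A_{1}$ enters: at $A_{0}=A_{1}$ one gets $\Lambda=1/2$, matching the generic digit density and causing the argument to fail, which is consistent with the absolute continuity found in \textbf{2B}.
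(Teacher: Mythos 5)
Your proof is correct and follows essentially the same route as the paper: Theorem \ref{RudinTheorem} applied to the sets $E_{i}(x)$, Lemma \ref{LemmaName}, Borel's normal number theorem, and the inequality $\Lambda<\frac{1}{2}$ (which you verify explicitly via $(\sqrt{t}-1)^{2}>0$, a detail the paper only asserts with ``one can show''), combined with the absence of a pure point part from Proposition \textbf{2C}. The only caveat is your closing remark about where the mass is concentrated: that claim is not part of the statement and, as stated, needs a further argument (the paper proves it as a separate theorem using \cite[Theorem 7.15]{RudinAnalysis}), since $g=0$ almost everywhere only shows $\mu$ is carried by \emph{some} Lebesgue-null set, not by that particular one.
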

\begin{proof}
We already know that $\mu$ has no pure point part. Theorem \ref{RudinTheorem} says that  $\lim_{i \to \infty}\frac{\mu(E_{i}(x))}{\lambda(E_{i}(x))}=g(x)$ for Lebesgue almost all $x$. One can show that $\Lambda<\frac{1}{2}$, so Lemma \ref{LemmaName} implies that whether $A_{0}<A_{1}$ or $A_{0}>A_{1}$, this limit is zero if $x$ has density $\frac{1}{2}$ of zeroes, i.e. if $x$ is normal in base $2$. This is Lebesgue almost all $x\in [0,1)$ by Borel's Normal Number theorem \cite{BorelNormalNumbers}. Hence $g$ is the zero function, and $\mu$ has no absolutely continuous part. Thus $\mu$ is singular continuous.
\end{proof}

In principal, Lemma \ref{MainResult} allows us to calculate the measure of any interval, while the behaviour of $\frac{\mu(E_{i}(x))}{\lambda(E_{i}(x))}$ as $i\to \infty$ indicates how $\mu$'s mass is arranged around $x$. We can also say something about where $\mu$ is concentrated.

\begin{thm}
Suppose $A_{0}<A_{1}$ (resp. $A_{0}>A_{1}$). Then $\mu$ is concentrated on the complement of the set $\mathfrak{R}_{\Lambda} \subset [0,1)$ of all $x$ with lower density of zeroes (resp. ones) greater than $\Lambda$ in their binary representation.
\end{thm}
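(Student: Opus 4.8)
The claim that $\mu$ is concentrated on the complement of $\mathfrak{R}_{\Lambda}$ is precisely the statement $\mu(\mathfrak{R}_{\Lambda})=0$, and the plan is to prove it by a direct covering estimate built from Lemma \ref{MainResult}. Throughout take $A_{0}<A_{1}$, write $A=A_{0}+A_{1}\geqslant 3$ and $p=\frac{A_{0}}{A_{0}+A_{1}}$. Since $x\in\mathfrak{R}_{\Lambda}$ exactly when $\liminf_{i}\#0_{i}/i>\Lambda$, I would first decompose
$$
\mathfrak{R}_{\Lambda}=\bigcup_{\delta \in \mathbb{Q}^{+}}\bigcup_{I \in \mathbb{N}} V_{\delta,I}, \qquad V_{\delta,I}=\{x : \#0_{i}/i \geqslant \Lambda+\delta \text{ for all } i \geqslant I\},
$$
so that by countable subadditivity it suffices to show $\mu(V_{\delta,I})=0$ for each fixed $\delta,I$. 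For this, let $U_{i,\delta}$ be the union of those level-$i$ dyadic cylinders $E_{i}(x)$ whose first $i$ binary digits contain at least $(\Lambda+\delta)i$ zeros; these are disjoint half-open intervals, $V_{\delta,I}\subseteq U_{i,\delta}$ for every $i\geqslant I$, and hence $\mu(V_{\delta,I})\leqslant \liminf_{i}\mu(U_{i,\delta})$. The whole problem thus reduces to proving $\mu(U_{i,\delta})\to 0$ as $i\to\infty$.

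To estimate $\mu(U_{i,\delta})=\sum_{E_{i}\subseteq U_{i,\delta}}\mu(E_{i})$ I would sum the exact cylinder measures of Lemma \ref{MainResult}. Since $A_{0},A_{1}\geqslant 1$, every partial product $\prod_{k\leqslant j}A_{x_{k}}$ is at least $1$, so the inner bracket $f(1)+\sum_{j}b_{x_{j}}/\prod_{k\leqslant j}A_{x_{k}}$ is bounded by $f(1)+i\,b_{\max}=\mathcal{O}(i)$ with $b_{\max}=\max(b_{0},b_{1})$, while the additive tail $\tfrac{b_{0}+b_{1}}{A-2}$ is constant. Grouping the cylinders in $U_{i,\delta}$ by their number of zeros $k$ (there are $\binom{i}{k}$ with exactly $k$, each carrying $A_{0}^{k}A_{1}^{i-k}$) and using $\sum_{k}\binom{i}{k}A_{0}^{k}A_{1}^{i-k}=A^{i}$, the main term becomes
$$
\frac{f(1)+i\,b_{\max}}{\sigma(\infty)}\sum_{k\geqslant (\Lambda+\delta)i}\binom{i}{k}\frac{A_{0}^{k}A_{1}^{i-k}}{A^{i}},
$$
in which the sum is exactly $\mathbb{P}\bigl(X_{i}\geqslant (\Lambda+\delta)i\bigr)$ for a binomial variable $X_{i}$ counting zeros with success probability $p=A_{0}/A$. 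The constant tail contributes at most $\tfrac{b_{0}+b_{1}}{(A-2)\sigma(\infty)}(2/A)^{i}$, which vanishes because $A>2$. The heart of the argument is then to show this binomial tail decays fast enough to beat the $\mathcal{O}(i)$ prefactor: provided the threshold exceeds the mean, i.e. $\Lambda+\delta>p$, a Chernoff (large deviation) bound gives $\mathbb{P}(X_{i}\geqslant (\Lambda+\delta)i)\leqslant e^{-c_{\delta} i}$ for some $c_{\delta}>0$, and $i\,e^{-c_{\delta}i}\to 0$. I would emphasise that the exponential rate is genuinely needed: when $\min(A_{0},A_{1})=1$ the prefactor really is linear in $i$, so a mere second-moment (Chebyshev) estimate, giving only $\mathcal{O}(1/i)$ decay, would not suffice. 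Combining the two pieces yields $\mu(U_{i,\delta})\to 0$, hence $\mu(V_{\delta,I})=0$ and finally $\mu(\mathfrak{R}_{\Lambda})=0$.

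The main obstacle, and the one genuinely new inequality the proof rests on, is verifying that the threshold does exceed the mean, namely $\Lambda>p=A_{0}/(A_{0}+A_{1})$ whenever $A_{0}<A_{1}$. Setting $r=A_{1}/A_{0}>1$, this is equivalent to $h(r):=\log 2-\log(1+r)+\frac{r\log r}{1+r}\geqslant 0$; since $h(1)=0$ and $h'(r)=\frac{\log r}{(1+r)^{2}}>0$ for $r>1$, the inequality holds strictly, leaving room to choose every $\delta>0$ with $\Lambda+\delta>p$. The case $A_{0}>A_{1}$ is identical after swapping the roles of $0$ and $1$. I would note that this dovetails with the preceding singular-continuity result: there, normal numbers with zero-density $\tfrac12>\Lambda$ carry no absolutely continuous part, while here the measure's own typical zero-density $p<\Lambda$ explains why all of its mass avoids $\mathfrak{R}_{\Lambda}$, consistent with $p<\Lambda<\tfrac12$.
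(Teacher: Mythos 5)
Your proof is correct, and it takes a genuinely different route from the paper's. The paper deduces the theorem from differentiation theory for singular measures: it invokes \cite[Theorem 7.15]{RudinAnalysis} (if $\mu$ is singular with respect to $\lambda$, then the set of $x$ at which the ratios $\mu(B(x,r_{i}))/\lambda(B(x,r_{i}))$ stay bounded along some sequence $r_{i}\to 0$ is $\mu$-null), observes that for non-dyadic $x$ one has $B(x,2^{-(i+2)})\subset E_{i}(x)$ for infinitely many $i$, and then applies Lemma \ref{LemmaName} to place every point of $\mathfrak{R}_{\Lambda}$ in that null set; this is short, but it consumes both the previously proved singularity of $\mu$ and the pointwise ratio Lemma \ref{LemmaName}. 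You instead prove $\mu(\mathfrak{R}_{\Lambda})=0$ directly by a covering and first-moment estimate: the countable decomposition into the sets $V_{\delta,I}$, the covering by the cylinder unions $U_{i,\delta}$, the summation of the exact cylinder masses from Lemma \ref{MainResult} into a binomial tail with success probability $p=A_{0}/(A_{0}+A_{1})$, and a Chernoff bound. All of your steps check out: the $\mathcal{O}(i)$ bound on the bracket is valid because $A_{0},A_{1}\geqslant 1$; the $(2/A)^{i}$ bound on the constant tail is valid because $A\geqslant 3$; exponential decay beats the linear prefactor; and your verification of the one genuinely new ingredient --- the inequality $\Lambda>p$, via $h(1)=0$ and $h'(r)=\log(r)/(1+r)^{2}>0$ for $r>1$ --- is accurate, and is exactly what guarantees that every $\delta>0$ places the threshold $\Lambda+\delta$ above the mean $p$. (Your remark that plain Chebyshev would give only $\mathcal{O}(1/i)$ and hence fail against the linear prefactor when $\min(A_{0},A_{1})=1$ is also correct.) Your route buys two things the paper's does not: it is logically independent of the singularity theorem --- indeed it re-proves singularity, since normal numbers have lower density of zeroes $\tfrac{1}{2}>\Lambda$, hence lie in $\mathfrak{R}_{\Lambda}$, so your conclusion forces $\mu$ to live on a $\lambda$-null set --- and it is quantitative, giving exponentially small bounds on the $\mu$-measure of the level-$i$ dyadic neighbourhoods of $\mathfrak{R}_{\Lambda}$, which could be pushed toward dimension estimates for a set carrying $\mu$. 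What the paper's route buys in exchange is brevity: given Lemma \ref{LemmaName} and singularity already in hand, it finishes in a few lines and never needs the inequality $\Lambda>p$ at all.
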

\begin{proof}
The result \cite[Theorem 7.15]{RudinAnalysis} includes a proof that if $\mu$ is a measure on $\mathbb{T}$ which is singular with respect to $\lambda$, then $ \mathfrak{C}= \{x \in [0,1) \; |\; \exists \; \{r_{i}\}_{i=1}^{\infty} \to 0 \text { as } i \to \infty, \; \exists M>0:\; \forall i \;\frac{\mu(B(x,r_{i}))}{\lambda(B(x,r_{i}))}<M \}$ has $\mu$-measure $0$. Consider $E_{i}(x)$, where $x$ is not a dyadic rational (which we ignore since they are countable and $\mu$ is continuous). Then $\ldots x_{i}01\ldots$ or $\ldots x_{i}10\ldots$ occur infinitely often in $x$'s binary representation. For these $i$, $B(x,\frac{1}{2^{i+2}})\subset E_{i}(x)$. Thus
$$
\frac{\mu(B(x,\frac{1}{2^{i+2}}))}{\lambda(B(x,\frac{1}{2^{i+2}}))} \leqslant 2\cdot \frac{\mu(E_{i}(x))}{\lambda(E_{i}(x))}
$$
Hence if for these $i$ (and in particular if for all $i$), $\mu(E_{i}(x)) / \lambda(E_{i}(x))$ remains bounded then $x\in \mathfrak{C}$. Thus by Lemma \ref{LemmaName} $\mathfrak{R}_{\Lambda} \subset \mathfrak{C}$ and so $\mathfrak{R}_{\Lambda}$ has $\mu$-measure $0$, and $\mu$ is concentrated on its complement, $\mu(\mathfrak{R}_{\Lambda}^{c})=1$.
\end{proof}

\subsection{Case 2D}

\begin{thm} \label{PurePoint}
In Case \textbf{2D} of Theorem \ref{THEOREM}, $\mu$ is pure point and supported on the dyadic rationals.
\end{thm}
\begin{proof}
Assume $A_{1}=0$, and let $A=A_{0}$. The proof for $A_{0}=0$ follows \emph{mutatis mutandis}. Let $x=(0.x_{1}\cdots)_{2}$ be a dyadic rational. We estimate the measure of $\{x\}$. Since singletons are closed, we can use Theorem \ref{Portmanteau} \eqref{Second}. We start with
$$
\mu_{N}(\{x\})=\frac{f((1x_{1}\cdots x_{N})_{2})} {\Sigma(N)}=\frac{1}{\sigma(N)}\left(\frac{A_{x_{1}}\cdots A_{x_{N}}}{A^{N}}f(1)+\sum_{j=1}^{i}\frac{b_{x_{j}}}{A^{N}}\prod_{k=j+1}^{N}A_{x_{k}}\right).
$$
If $x=0$ then all $x_{i}$ are zero, and all $A_{x_{i}}=A\neq0$. This results in
$$
\mu(\{0\})\geqslant \limsup_{N \to \infty}\frac{1}{\sigma(N)} \left(f(1)+\sum_{j=1}^{N}\frac{b_{0}}{A^{j}}\right)=\frac{1}{\sigma(\infty)}\left(f(1)+\frac{b_{0}}{A-1}\right).
$$
If $x$ is not zero, then some $x_{i}$ is $1$. If $x_{n}$ is the last such bit and $N>n$, then
$$
\mu(\{x\}) \geqslant \limsup_{N \to \infty} \frac{1}{A^{N}}\frac{1}{\sigma(N)}\left(\sum_{j=n}^{N}b_{x_{j}}A^{N-j}\right)= \frac{1}{\sigma(\infty)}\left(\frac{b_{1}}{A^{n}}+\frac{b_{0}}{A^{n}}\frac{1}{A-1}\right).
$$
There are $2^{n-1}$ rationals in $[0,1)$ such that $x_{n}$ is the last non-zero bit. Thus the total mass supplied by these is at least $\frac{1}{\sigma(\infty)}\left(\frac{2}{A}\right)^{n}\left(\frac{b_{1}}{2}+\frac{b_{0}}{2}\frac{1}{A-1}\right)$. Summing over $n$ gives $\frac{1}{\sigma(\infty)}\frac{1}{A-2}\left(b_{1}+\frac{b_{0}}{A-1}\right)$. Adding this to the $x=0$ contribution gives
$$1=\mu(\mathbb{T})\geqslant \sum_{x \in \mathbb{Z}[2]}\mu(\{x\}) \geqslant \frac{1}{\sigma(\infty)}\left(f(1)+\frac{b_{0}}{A-1}+ \frac{1}{A-2}\left(b_{1}+\frac{b_{0}}{A-1}\right)\right)=1.$$
Hence all inequalities here are in fact equalities, and all of $\mu$'s mass is contained in these pure points. Hence $\mu$ is pure point and supported on dyadic rationals.\end{proof}

\section{Concluding Remarks} \label{Observations}

In this paper, we obtained detailed information about specific examples of ghost measures. One may wonder if there is a way to obtain this information without as much manual labour. In particular, is there a way to determine the Lebesgue decomposition of the ghost measure of a $k$-regular sequence directly from its linear representation? There may be no hope in general---the $k$-regular sequences can be quite wild. However, the subset of sequences that practitioners care about seems to be better behaved. For example, the Lebesgue type of every affine $2$-regular sequence is pure, and purity of type seems quite common in general. 

Suppose our $k$-regular sequence $f$ had linear representation involving matrices $C_{0},C_{1},\ldots,C_{k-1}$. Let $Q=C_{0}+\cdots+C_{k-1}$, and let $\rho$ be the spectral radius of $Q$. Also let $\rho^{*}$ be the joint spectral radius of the $C_{i}$. For the obvious linear representations of the affine $2$-regular sequences, these quantities are easily calculated. The below table compares the value of $\log_{k}(\frac{\rho}{\rho^{*}})$ to the Lebesgue type of $\mu$.
\begin{table}[h]
    \scriptsize
    \centering
    \begin{tabular}{c|c|c}
    Case & $\log_{2}(\frac{\rho}{\rho^{*}})$ & Lebesgue type\\
    \hline 
    \textbf{1A} & 1 & A.C.\\
    \textbf{1B} & $\log_{2}(\frac{A_{0}+A_{1}}{\max(A_{0},A_{1})})$ & S.C.\\
    \textbf{1C} & 0 & P.P.\\
    \textbf{2A} & ?\footnotemark & A.C.\\
    \textbf{2B} & 1 & A.C. \\
    \textbf{2C} & $\log_{2}(\frac{A_{0}+A_{1}}{\max(A_{0},A_{1})})$ & S.C. \\
    \textbf{2D} & 0 & P.P\\
    \end{tabular}
    \label{EigenvalueComparison}
\end{table}
\footnotetext{Here the quantity is $1$, except for when some $A_{i}=0$ and the other is $2$, when it equals $0$.}

The pattern is clear: $\log_{k}(\frac{\rho}{\rho^{*}})=1$ implies absolutely continuous, between $0$ and $1$ implies singular continuous, and $0$ implies pure point (almost). This quantity $\log_{k}(\frac{\rho}{\rho^{*}})$ appears elsewhere in connection to $k$-regular sequences, for example in \cite{CEMRegularPaper}, where it is proved that under some non-degeneracy conditions, if $\log_{k}(\frac{\rho}{\rho^{*}})>0$ then the ghost measure of the $k$-regular sequence exists and is continuous. Moreover, the distribution function $F(x)=\mu([0,x])$ has H\"older exponent $\alpha$ for all $\alpha<\log_{k}(\frac{\rho}{\rho^{*}})$, and exponent equal to $\log_{k}(\frac{\rho}{\rho^{*}})$ if the matrices $C_{i}$ obey the so-called finiteness condition. This quantity also seems to be related to the Hausdorff dimension of certain fractals arising from $k$-regular sequences \cite{CEFractalPaper}. We leave it to the reader to further explore these connections.

\section*{Acknowledgements}

I wish to thank Michael Coons for supervising my PhD, of which this work is part. I also thank the Commonwealth of Australia for supporting my studies.

\newpage

\bibliographystyle{unsrt}
\bibliography{bibliography}

\end{document}